\newtheorem{lemma}{Lemma} [section]
\newtheorem{theorem}{Theorem} [section]
\newtheorem{open}{Open Question}
\begin{document}
Sangaku Journal of Mathematics (SJM) \copyright SJM \\
ISSN 2534-9562 \\
Volume 3 (2019), pp.73-90  \\
Received 20 August 2019. Published on-line 30 September 2019 \\ 
web: \url{http://www.sangaku-journal.eu/} \\
\copyright The Author(s) This article is published 
with open access\footnote{This article is distributed under the terms of the Creative Commons Attribution License which permits any use, distribution, and reproduction in any medium, provided the original author(s) and the source are credited.}. \\
\bigskip
\bigskip

\begin{center}
{\Large \textbf{Relationships Between Six Excircles}} \\
\medskip
\bigskip
\textsc{Stanley Rabinowitz} \\
545 Elm St Unit 1,  Milford, New Hampshire 03055, USA \\
e-mail: \href{mailto:stan.rabinowitz@comcast.net}{stan.rabinowitz@comcast.net} \\
web: \url{http://www.StanleyRabinowitz.com/} \\
\end{center}
\bigskip

\textbf{Abstract.} If $P$ is a point inside $\triangle ABC$, then the cevians
through $P$ divide $\triangle ABC$ into smaller triangles of various sizes.
We give theorems about the relationship between the radii of certain
excircles of some of these triangles.

\medskip
\textbf{Keywords.} Euclidean geometry, triangle geometry, excircles, exradii, cevians.

\medskip
\textbf{Mathematics Subject Classification (2010).} 51M04.

\bigskip
\bigskip
\section{Introduction}

\newcommand{\degrees}{^\circ}

Let $P$ be any point inside a triangle $ABC$. The cevians
through $P$ divide $\triangle ABC$ into six smaller triangles.
In a previous paper \cite{Rabinowitz}, we found relationships between the radii of
the circles inscribed in these triangles.

For example, if $P$ is at the orthocenter $H$, as shown in Figure \ref{fig:incirclesH},
then we found that $r_1r_3r_5=r_2r_4r_6$, where the $r_i$ are radii of the incircles as shown in the figure.

\begin{figure}[h!t]
\centering
\includegraphics[width=0.4\linewidth]{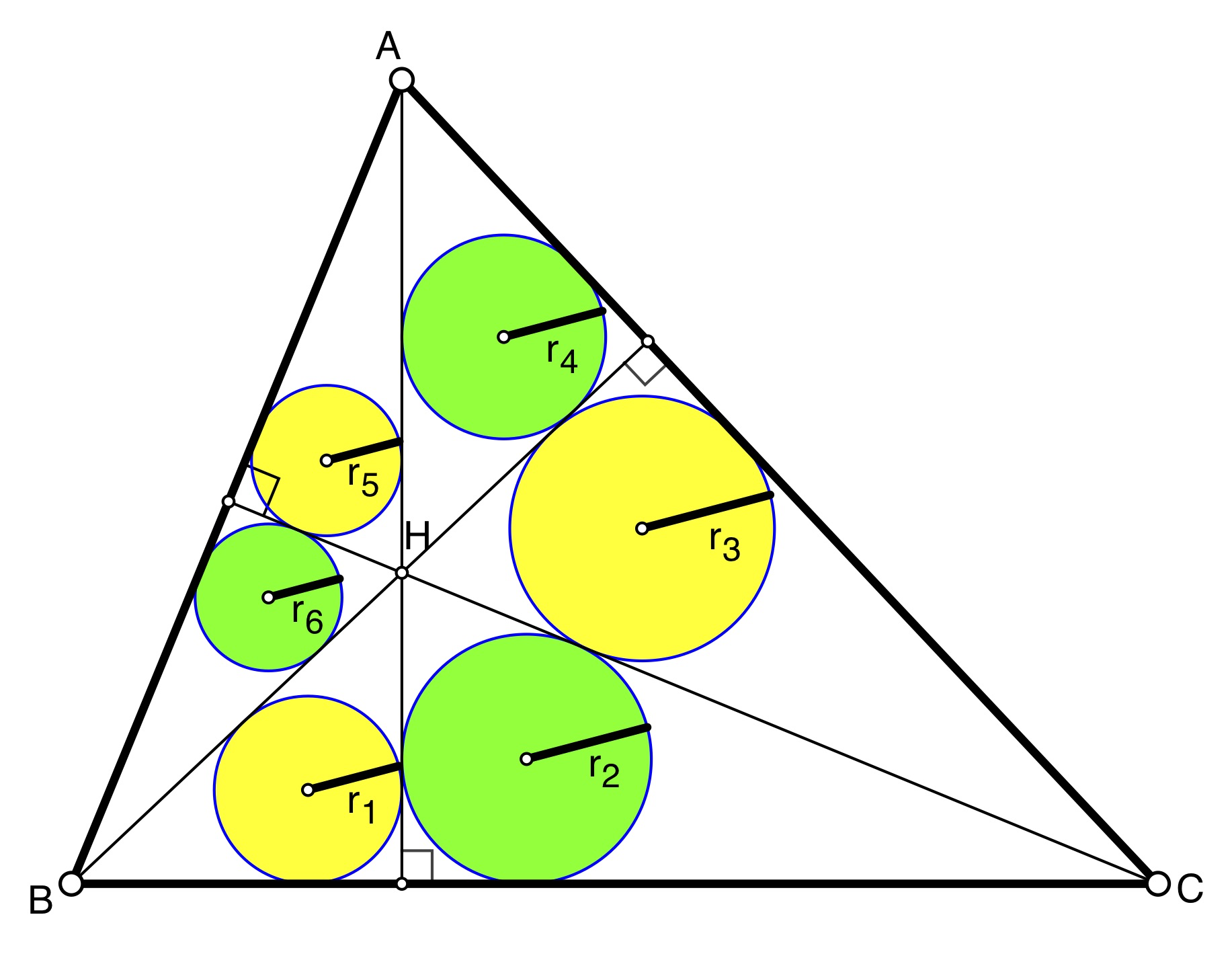}
\caption{$r_1r_3r_5=r_2r_4r_6$}
\label{fig:incirclesH}
\end{figure}

In this paper, we will find similar results using excircles instead of incircles.
When the cevians through a point $P$ interior to a triangle $ABC$ are drawn,
many smaller triangles of various sizes are formed. These triangles have three
excircles each. In this paper, we only investigate two configurations of six excircles.
These two configurations are shown in Figure \ref{fig:configurations}.
Note that in configuration 1, the circle with radius $r_1$ is an excircle of $\triangle BAD$.
In configuration 2, the circle with radius $r_1$ is an excircle of $\triangle BPD$.

\begin{figure}[h!t]
\centering
\includegraphics[width=0.9\linewidth]{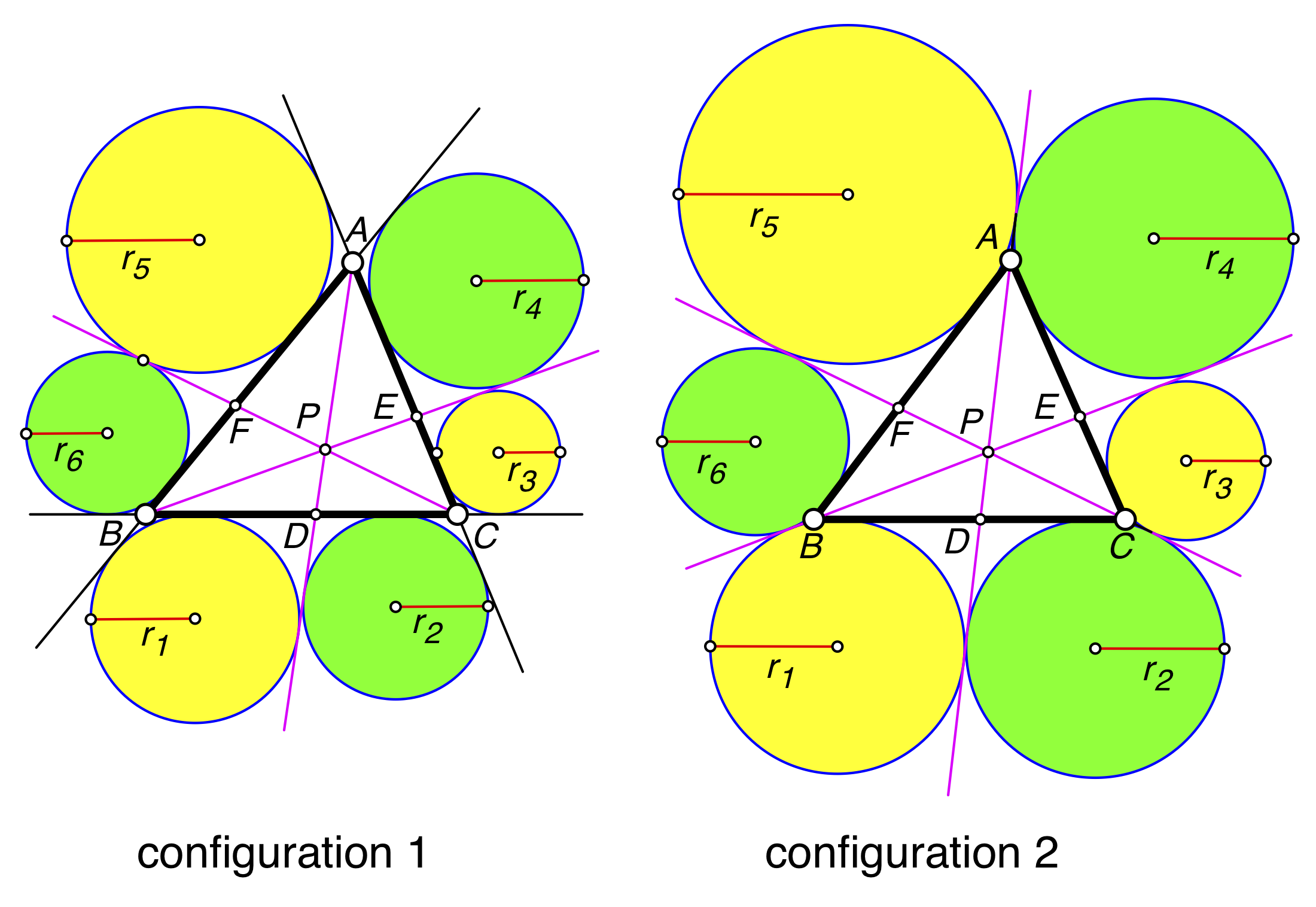}
\caption{configurations}
\label{fig:configurations}
\end{figure}

\bigskip
\textbf{Notation}: If $X$ and $Y$ are points, then we use the notation $XY$ to denote either the line segment joining $X$ and $Y$ or the length of that line segment, depending on the context.
The notation [XYZ] denotes the area of $\triangle XYZ$.

\bigskip 
\section{The Orthocenter}

When $P$ is the orthocenter of $\triangle ABC$, we have two results, depending upon which excircles are used.

\begin{theorem}
\label{thm:Orthocenter1}
Suppose the orthocenter, $H$, of $\triangle ABC$ lies inside $\triangle ABC$.
Let $r_1$ through $r_6$ be the radii of six circles tangent to the sides of $\triangle ABC$ and the cevians through $H$ situated as shown in Figure~\ref{fig:orthocenter1}.
Then $r_1r_3r_5=r_2r_4r_6$.
\end{theorem}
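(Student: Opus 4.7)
The plan is to exploit the fact that when $P=H$ the cevians $AD$, $BE$, $CF$ are altitudes, so each of the six half-triangles $\triangle BAD$, $\triangle CAD$, $\triangle CBE$, $\triangle ABE$, $\triangle ACF$, $\triangle BCF$ is a right triangle with its right angle at the foot of the altitude. In $\triangle BAD$, for example, the angle at $D$ is $90^\circ$, the angle at $B$ equals the angle $B$ of $\triangle ABC$, and the hypotenuse is $AB = c$; the other five half-triangles arise by cyclic permutation. For any right triangle with hypotenuse $L$ and acute angle $\theta$, the legs are $L\cos\theta$ and $L\sin\theta$, and, after applying the identity $1+\sin\theta+\cos\theta = 2\cos(\theta/2)(\cos(\theta/2)+\sin(\theta/2))$, the semi-perimeter collapses to
\[
s \;=\; L\cos(\theta/2)\bigl(\cos(\theta/2)+\sin(\theta/2)\bigr).
\]

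Next I would invoke the standard identity $r_X = s\tan(X/2)$, where $r_X$ is the exradius opposite the vertex carrying angle $X$. Applied to each of the six right triangles, this expresses each $r_i$ as a product of one side of $\triangle ABC$ with a simple half-angle expression in one of $A/2$, $B/2$, $C/2$. Reading off Figure~\ref{fig:orthocenter1}, the cyclic numbering places $r_1, r_3, r_5$ in three half-triangles whose hypotenuses are $c$, $a$, $b$ paired respectively with the acute angles $B$, $C$, $A$, while $r_2, r_4, r_6$ occupy the complementary triple with pairings $b/C$, $c/A$, $a/B$. Both triples therefore sweep out each of the sides $a, b, c$ exactly once and each of the half-angles $A/2, B/2, C/2$ exactly once, so each of the products $r_1 r_3 r_5$ and $r_2 r_4 r_6$ collapses to the same value
\[
abc \cdot g(A/2)\,g(B/2)\,g(C/2),
\]
where $g$ is the elementary half-angle function determined by which of the three excircles of each half-triangle is the labeled one.

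The main obstacle is purely bookkeeping: one must read off from the figure which specific excircle of each half-triangle is labeled $r_i$ (three natural symmetric choices exist -- tangent to the hypotenuse, tangent to the cevian, or tangent to the half-side lying along $\triangle ABC$) and verify that the numbering splits the six half-triangles into two cyclically-symmetric triples of the sort described. Once this symmetry is confirmed, the product identity is immediate from the complete $a\leftrightarrow b \leftrightarrow c$, $A\leftrightarrow B\leftrightarrow C$ symmetry of the expressions, and no further trigonometric work is required.
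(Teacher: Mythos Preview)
Your approach is correct, but it is considerably more elaborate than the paper's. You parametrize each half-triangle by its hypotenuse (a side of $\triangle ABC$) and its acute angle at a vertex of $\triangle ABC$, invoke the exradius formula $r_X=s\tan(X/2)$, and then observe that the two triples $\{r_1,r_3,r_5\}$ and $\{r_2,r_4,r_6\}$ each sweep through $\{a,b,c\}$ and $\{A,B,C\}$ exactly once, forcing the products to agree. This is sound, and your acknowledged bookkeeping (checking from the figure that the six excircles are all of the same ``type'' relative to their half-triangles) is routine given the cyclic symmetry of the configuration.

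The paper, by contrast, bypasses all trigonometry. It simply pairs the six half-triangles into three similar pairs sharing a vertex angle of $\triangle ABC$ (for instance $\triangle ABD\sim\triangle CBF$, both right-angled with angle $B$ at $B$), so the corresponding excircles satisfy $r_1/r_6=AD/CF$, $r_3/r_2=BE/AD$, $r_5/r_4=CF/BE$; multiplying telescopes to $1$. Your formula $r_i=(\text{hypotenuse})\cdot g(\text{angle})$ is in effect a restatement of this similarity, so the two arguments rest on the same geometric fact, but the paper extracts the conclusion in three lines without the exradius formula, the half-angle identity for the semiperimeter, or any case analysis on which excircle is meant. What your route buys is uniformity: the same symmetry argument works verbatim for whichever of the three excircle types the figure specifies, whereas the similarity argument implicitly requires matching corresponding excircles in each similar pair.
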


\begin{figure}[h!t]
\centering
\includegraphics[width=0.5\linewidth]{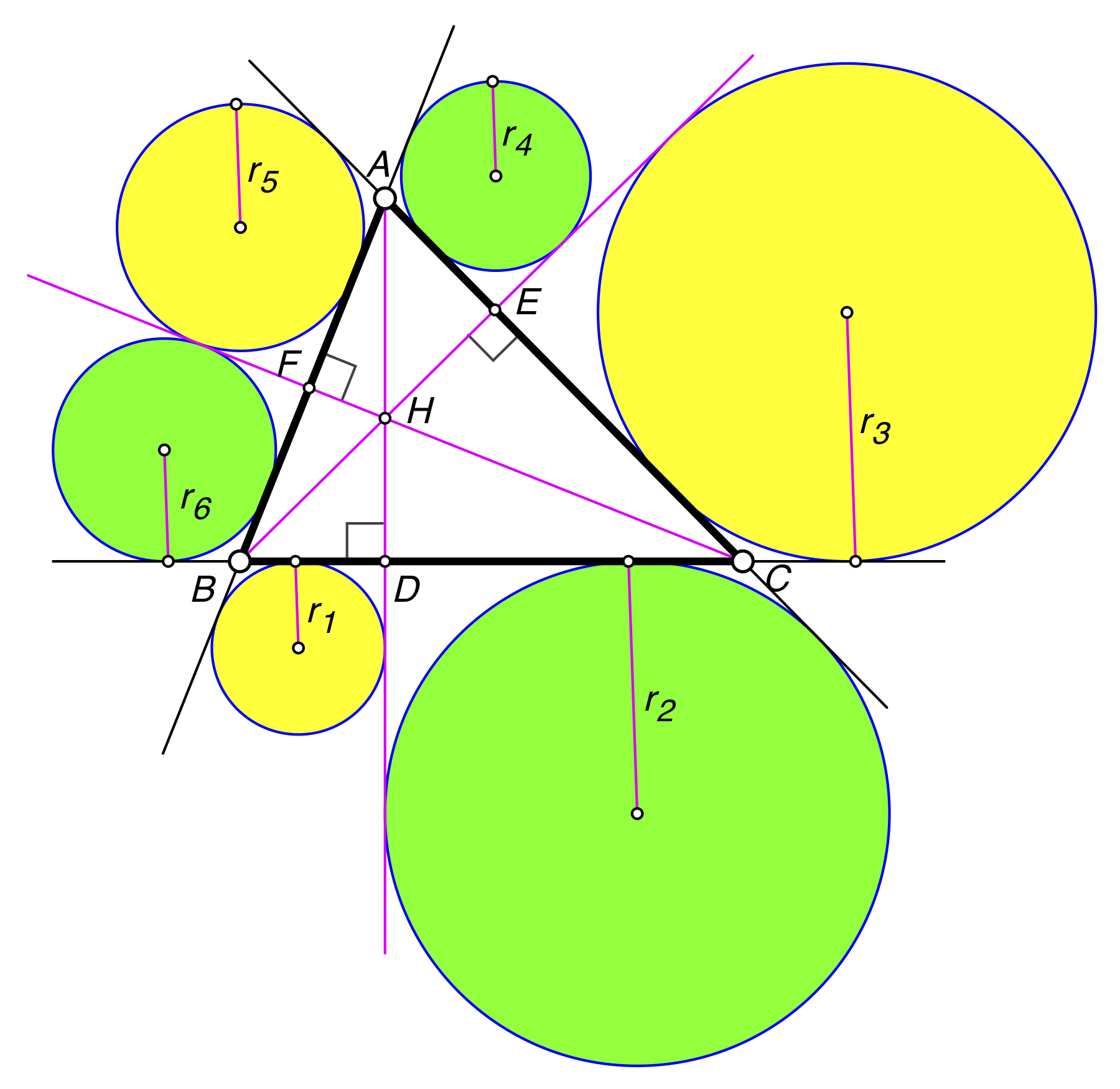}
\caption{$r_1r_3r_5=r_2r_4r_6$}
\label{fig:orthocenter1}
\end{figure}

\begin{proof}
Note that the figure consisting of $\triangle ABD$ together with the circle with radius~$r_1$ is similar
to the figure consisting of $\triangle CBF$ together with the circle with radius~$r_6$.
Corresponding lengths in similar figures are in proportion, so $r_1/AD=r_6/CF$.
Similarly, we find $r_3/BE=r_2/AD$ and $r_5/CF=r_4/BE$.
Therefore
$$\frac{r_1}{r_6}\cdot\frac{r_3}{r_2}\cdot\frac{r_5}{r_4}
=\frac{AD}{CF}\cdot\frac{BE}{AD}\cdot\frac{CF}{BE}=1$$
which implies
$r_1r_3r_5=r_2r_4r_6$.
\end{proof}

A similar result occurs when different excircles are used.

\begin{theorem}
\label{thm:Orthocenter2}
Suppose the orthocenter, $H$, of $\triangle ABC$ lies inside $\triangle ABC$.
Let $r_1$ through $r_6$ be the radii of six circles tangent to the sides of $\triangle ABC$ and the cevians through $H$ situated as shown in Figure~\ref{fig:orthocenter2}.
Then $r_1r_3r_5=r_2r_4r_6$.
\end{theorem}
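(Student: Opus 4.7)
The plan is to follow the template of the proof of Theorem~\ref{thm:Orthocenter1}, with the six small sub-triangles meeting at $H$ playing the role that the six half-triangles cut off by individual cevians played there. The key is that the six small triangles
\[
\triangle AHF,\ \triangle BHF,\ \triangle BHD,\ \triangle CHD,\ \triangle CHE,\ \triangle AHE
\]
(listed in cyclic order around $H$) split into three pairs of similar triangles, opposite each other in this list, with similarity ratios involving only the segments $AH$, $BH$, $CH$. These three ratios will telescope to $1$ in exactly the same way that $AD$, $BE$, $CF$ telescoped in Theorem~\ref{thm:Orthocenter1}.

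First I will verify the three similarities. Each small triangle is right-angled at the foot of an altitude. From $BE \perp CA$ we obtain $\angle HBD = \angle EBC = 90\degrees - C$, and from $AD \perp BC$ we obtain $\angle HAE = \angle DAC = 90\degrees - C$, so $\triangle BHD$ and $\triangle AHE$ share the angle set $\{90\degrees,\, 90\degrees - C,\, C\}$ with correspondence $B \leftrightarrow A$, $H \leftrightarrow H$, $D \leftrightarrow E$. The same kind of angle chase gives $\triangle CHD \sim \triangle AHF$ (angle set involving $90\degrees - B$) and $\triangle CHE \sim \triangle BHF$ (angle set involving $90\degrees - A$). Reading Figure~\ref{fig:orthocenter2}, I then note that the labels are arranged so that $r_i$ and $r_{i+3}$ are the corresponding excircles of the paired similar triangles; consequently the figure consisting of each small triangle together with its $r_i$-excircle is similar to the figure three positions further around, and the two radii are in the similarity ratio. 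Taking hypotenuses (the segments $BH$, $CH$, $AH$ from $H$ to the vertex of $\triangle ABC$ belonging to each small triangle) as reference sides, I obtain
\[
\frac{r_1}{r_4} = \frac{BH}{AH}, \qquad \frac{r_3}{r_6} = \frac{CH}{BH}, \qquad \frac{r_5}{r_2} = \frac{AH}{CH}.
\]

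Multiplying these three ratios collapses immediately to $1$, yielding $r_1 r_3 r_5 = r_2 r_4 r_6$. The main work is not computational but is bookkeeping: confirming from Figure~\ref{fig:orthocenter2} that the labels $r_1, \ldots, r_6$ do cycle around $H$ in the same order as the small triangles, and that $r_i$ and $r_{i+3}$ really are corresponding excircles of the paired similar triangles (rather than different excircles of those triangles, in which case the similarity ratio argument would no longer apply directly). Once that labeling has been pinned down from the figure, the proof is a direct analogue of the telescoping argument used for Theorem~\ref{thm:Orthocenter1}.
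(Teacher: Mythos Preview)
Your proof is correct and follows essentially the same approach as the paper: both use the similarity of opposite small triangles at $H$ (e.g., $\triangle BHD \sim \triangle AHE$) to obtain the ratios $r_1/r_4 = BH/AH$, $r_3/r_6 = CH/BH$, $r_5/r_2 = AH/CH$, which multiply to $1$. You supply more detail on the angle chase establishing the similarities, but the argument is otherwise identical.
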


\bigskip
\begin{figure}[h!t]
\centering
\includegraphics[width=0.5\linewidth]{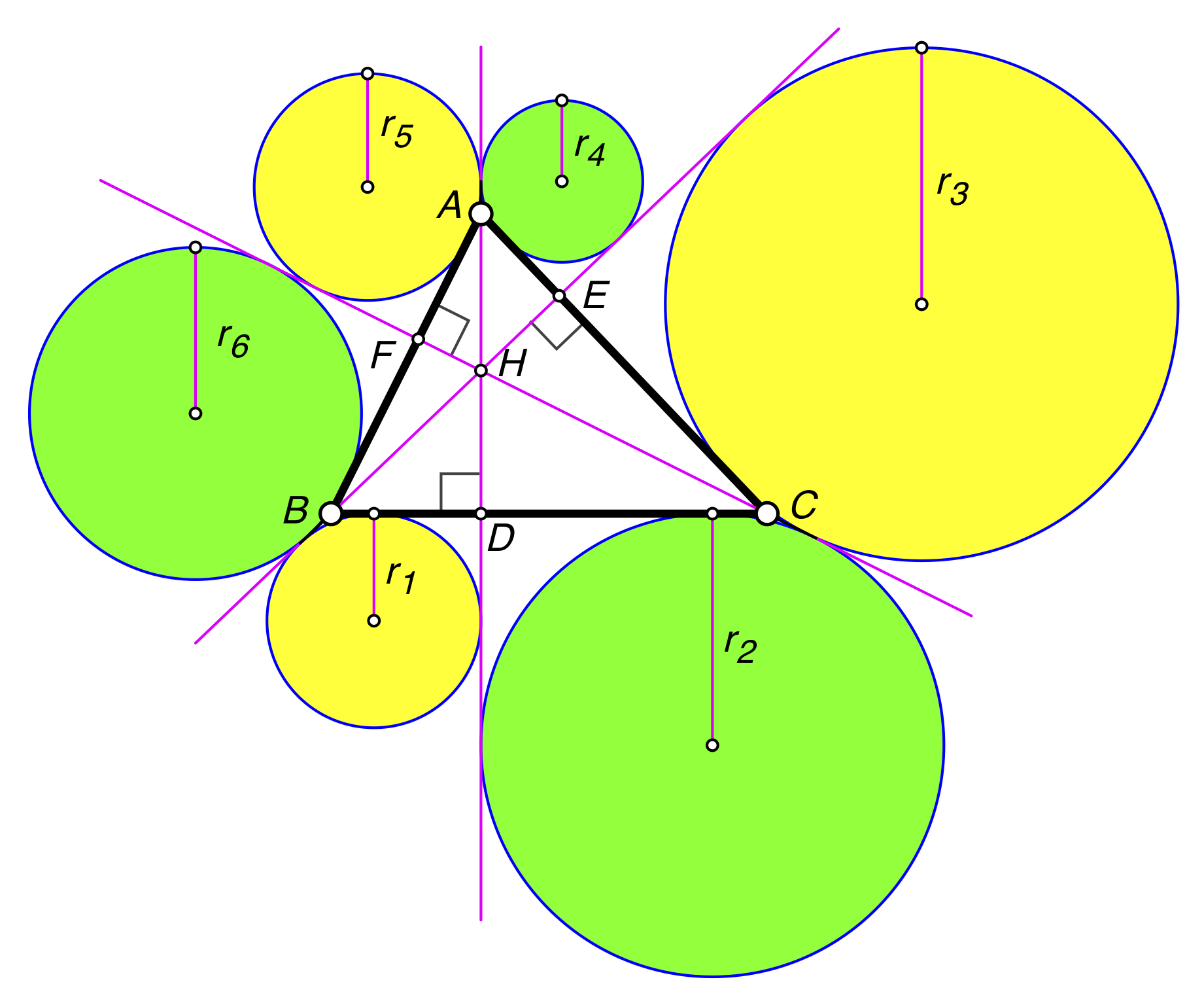}
\caption{$r_1r_3r_5=r_2r_4r_6$}
\label{fig:orthocenter2}
\end{figure}

\begin{proof}
Note that the figure consisting of $\triangle HBD$ together with the circle with radius $r_1$ is similar
to the figure consisting of $\triangle HAE$ together with the circle with radius $r_4$.
Corresponding lengths in similar figures are in proportion, so $r_1/BH=r_4/AH$.
Similarly, we find $r_3/CH=r_6/BH$ and $r_5/AH=r_2/CH$.
Therefore
$$\frac{r_1}{r_4}\cdot\frac{r_3}{r_6}\cdot\frac{r_5}{r_2}
=\frac{BH}{AH}\cdot\frac{CH}{BH}\cdot\frac{AH}{CH}=1$$
which implies
$r_1r_3r_5=r_2r_4r_6$.
\end{proof}

\bigskip 
\section{The Centroid}

Next, we will consider the case when the interior point $P$ is the centroid. We start with a lemma.

\begin{lemma}
\label{lemma:perims}
Let $M$ be the centroid of $\triangle ABC$ and let the medians be $AD$, $BE$, and $CF$.
Label the segments $BD$, $DC$, $CE$, $EA$, $AF$, and $FB$ with the numbers from 1 to 6 as shown in
Figure \ref{fig:medians}.
Let $s_i$ be the semiperimeter of the triangle formed by the segment labeled $i$ and the vertex of $\triangle ABC$ opposite that segment.
Then
\begin{equation}
\label{eq:perims}
s_1+s_3+s_5=s_2+s_4+s_6.
\end{equation}
\end{lemma}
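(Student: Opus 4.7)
The plan is to compute each $2s_i$ explicitly in terms of the side lengths and the medians of $\triangle ABC$, and then simply do the bookkeeping.

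Set $a = BC$, $b = CA$, $c = AB$, and write $m_a, m_b, m_c$ for the lengths of the medians $AD, BE, CF$. Because $D, E, F$ are midpoints, every segment on the boundary has length $a/2$, $b/2$, or $c/2$. For each label $i$ from 1 to 6, I would read off the three sides of the associated triangle from Figure~\ref{fig:medians} and form $2s_i$. For instance, segment 1 is $BD$ and its opposite vertex is $A$, so the associated triangle is $\triangle ABD$ with side lengths $AB=c$, $BD=a/2$, $AD=m_a$, giving $2s_1 = c + a/2 + m_a$. The other five expressions are written down identically.

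Next, I would sum the odd-indexed and the even-indexed semiperimeters separately. The crucial observation is that each median $m_a, m_b, m_c$ is shared by exactly two of the six little triangles, one having an odd label and the other an even label; the same is true of each full side $a, b, c$ (appearing as the ``opposite side'' in the two triangles sharing the corresponding vertex) and of each half-side $a/2, b/2, c/2$. Hence each of these nine lengths contributes exactly once to each sum, so
\[
2(s_1+s_3+s_5) \;=\; 2(s_2+s_4+s_6) \;=\; \tfrac{3}{2}(a+b+c) + m_a + m_b + m_c,
\]
from which \eqref{eq:perims} follows.

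The only real obstacle is this bookkeeping: one must check from Figure~\ref{fig:medians} that the labels 1 through 6 alternate around the boundary in the required way, so that the nine lengths split with multiplicity one into each parity class. It is worth noting that the proof uses only the midpoint property $BD=DC$, $CE=EA$, $AF=FB$; the concurrence of the three medians at the centroid $M$ is never invoked.
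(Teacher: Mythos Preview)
Your proof is correct and follows essentially the same route as the paper: both arguments write out each $2s_i$ as a sum of three segment lengths and then observe that the odd and even sums agree. The paper phrases the cancellation as $(BD-DC)+(CE-EA)+(AF-FB)=0$ directly from the midpoint property, whereas you go slightly further and record the common value $\tfrac{3}{2}(a+b+c)+m_a+m_b+m_c$; your closing remark that concurrency at $M$ is never used is also implicit in the paper's computation.
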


\bigskip
\begin{figure}[h!t]
\centering
\includegraphics[width=0.4\linewidth]{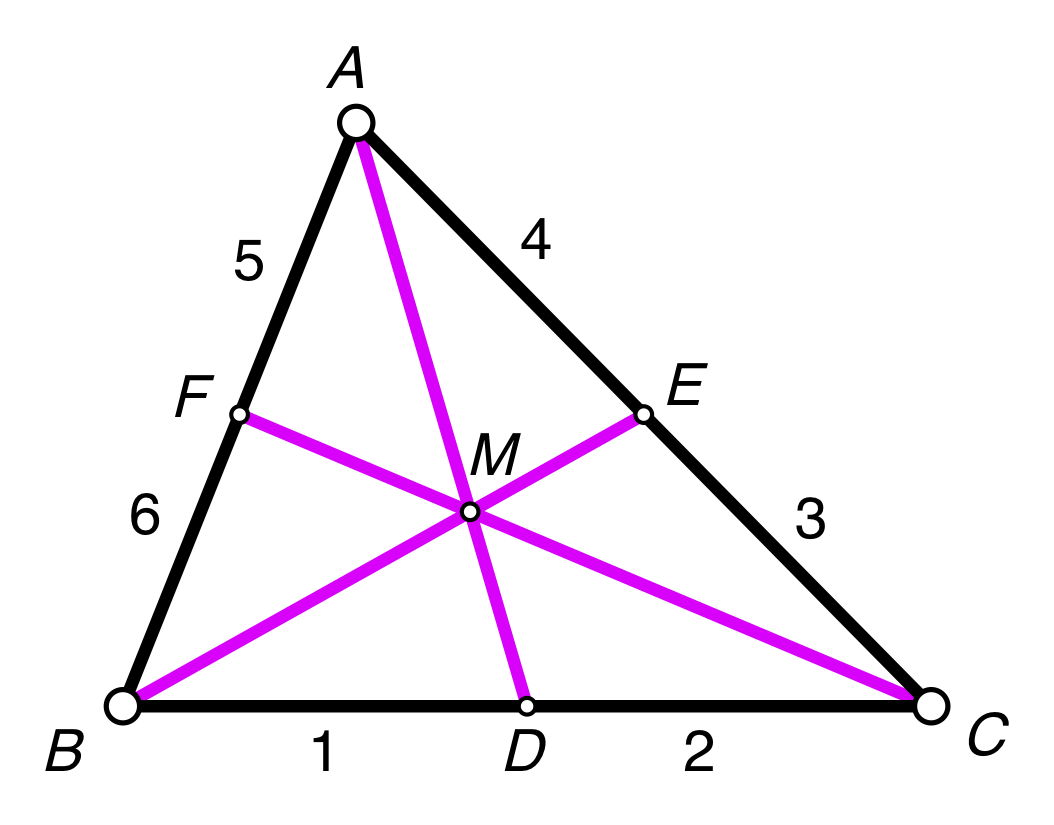}
\caption{medians}
\label{fig:medians}
\end{figure}

\begin{proof}
We have the following six equations for the perimeters of the six triangles.
$$
\begin{aligned}
2s_1&=BD+AD+AB,\qquad&2s_2&=DC+AD+CA,\\
2s_3&=CE+BE+BC,&2s_4&=EA+BE+AB,\\
2s_5&=AF+CF+CA,&2s_6&=FB+CF+BC.
\end{aligned}
$$
Thus,
$2s_1+2s_3+2s_5-(2s_2+2s_4+2s_6)=(BD-DC)+(CE-EA)+(AF-FB)=0$
and the lemma follows.
\end{proof}

Recall the well-known formula for the length of the radius of an excircle.
If the sides of a triangle have lengths $a$, $b$, and $c$, and the semiperimeter is $s$,
then the radius of the excircle that touches the side of length $a$ is $K/(s-a)$,
where $K$ is the area of the triangle. See, for example, \cite[p.~79]{Altshiller-Court}.

We can now state our results.

\begin{theorem}
\label{thm:Centroid1}
Let $M$ be the centroid of $\triangle ABC$ and let the medians be $AD$, $BE$, and $CF$.
Let $r_1$ through $r_6$ be the radii of six circles tangent to the sides of $\triangle ABC$ and the cevians through $M$ situated as shown in Figure~\ref{fig:centroid1}.
Then
$$\frac{1}{r_1}+\frac{1}{r_3}+\frac{1}{r_5}=\frac{1}{r_2}+\frac{1}{r_4}+\frac{1}{r_6}.$$
\end{theorem}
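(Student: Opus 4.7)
The plan is to reduce the identity to Lemma \ref{lemma:perims} via the exradius formula $r = K/(s - a)$, exploiting the fact that the centroid makes all six big triangles equal in area.

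First, I would observe that since $M$ is the centroid, the cevians are medians, so each of the six triangles $T_1 = \triangle ABD$, $T_2 = \triangle ADC$, $T_3 = \triangle BCE$, $T_4 = \triangle BEA$, $T_5 = \triangle CAF$, $T_6 = \triangle CFB$ appearing in Lemma \ref{lemma:perims} has area $K/2$, where $K = [\triangle ABC]$. Reading Figure \ref{fig:centroid1}, I would take $r_i$ to be the excircle of $T_i$ tangent to the segment labeled $i$ (the portion of $\triangle ABC$'s side lying in $T_i$), whose length I denote $\ell_i$. The exradius formula then gives
$$\frac{1}{r_i} = \frac{s_i - \ell_i}{[T_i]} = \frac{2(s_i - \ell_i)}{K}.$$

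Next, substituting, I would reduce the claim to the identity
$$(s_1 + s_3 + s_5) - (s_2 + s_4 + s_6) = (\ell_1 - \ell_2) + (\ell_3 - \ell_4) + (\ell_5 - \ell_6).$$
The left side vanishes by Lemma \ref{lemma:perims}. The right side vanishes because $D$, $E$, $F$ are midpoints, so $\ell_1 = \ell_2 = BC/2$, $\ell_3 = \ell_4 = CA/2$, and $\ell_5 = \ell_6 = AB/2$. Combining these two facts proves the theorem.

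The only real obstacle is confirming from Figure \ref{fig:centroid1} which side of each $T_i$ the excircle is tangent to; once that pairing is fixed, the proof is a mechanical application of the exradius formula together with the already-established lemma, and I do not anticipate any further geometric difficulty.
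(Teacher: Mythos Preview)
Your proposal is correct and follows essentially the same route as the paper: both use the exradius formula $r_i=[T_i]/(s_i-\ell_i)$ together with the equal areas $[T_i]=K/2$ of the median-split triangles, and then invoke Lemma~\ref{lemma:perims}. The only cosmetic difference is that the paper sums $\ell_1+\ell_3+\ell_5=\ell_2+\ell_4+\ell_6=s$ (the semiperimeter of $\triangle ABC$), whereas you cancel pairwise via $\ell_1=\ell_2$, $\ell_3=\ell_4$, $\ell_5=\ell_6$; these are the same fact rearranged.
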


\bigskip
\begin{figure}[h!t]
\centering
\includegraphics[width=0.5\linewidth]{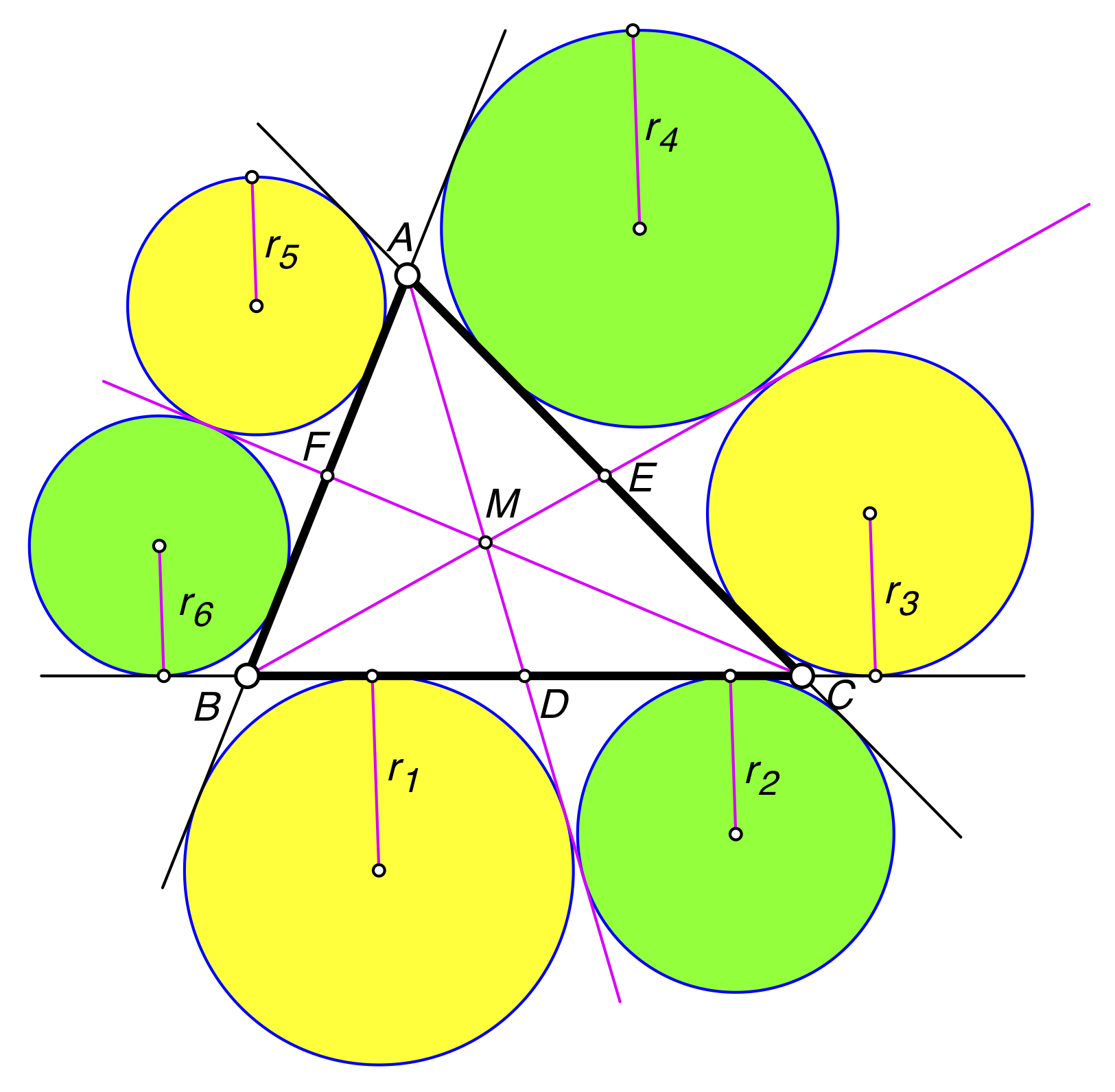}
\caption{}
\label{fig:centroid1}
\end{figure}

\begin{proof}
The circle with radius $r_i$ is an excircle of a triangle as shown in Figure \ref{fig:centroid1}.
Let $s_i$ be the semiperimeter of that triangle
and let $K_i$ be the area of that triangle.
Note that each $K_i$ is half the area of $\triangle ABC$. Denote this common value by $K$.
From the formula for the length of the radius of an excircle, we have
$$\frac{1}{r_1}=\frac{s_1-BD}{K},\qquad\frac{1}{r_3}=\frac{s_3-CE}{K},\qquad
\frac{1}{r_5}=\frac{s_5-AF}{K}.$$
Adding gives
$$\frac{1}{r_1}+\frac{1}{r_3}+\frac{1}{r_5}=\frac{s_1+s_3+s_5-s}{K}$$
where $s=BD+CE+AF$ is the semiperimeter of $\triangle ABC$.
In the same manner, we find
$$\frac{1}{r_2}+\frac{1}{r_4}+\frac{1}{r_6}=\frac{s_2+s_4+s_6-s}{K}.$$
From Lemma \ref{lemma:perims}, $s_1+s_3+s_5=s_2+s_4+s_6$.
Thus,
$$\frac{1}{r_1}+\frac{1}{r_3}+\frac{1}{r_5}=\frac{1}{r_2}+\frac{1}{r_4}+\frac{1}{r_6}$$
as required.
\end{proof}

A similar result occurs when different excircles are used.

\begin{theorem}
\label{thm:Centroid2}
Let $M$ be the centroid of $\triangle ABC$ and let the medians be $AD$, $BE$, and $CF$.
Let $r_1$ through $r_6$ be the radii of six circles tangent to the sides of $\triangle ABC$ and the cevians through $M$ situated as shown in Figure~\ref{fig:centroid2}.
Then
$$\frac{1}{r_1}+\frac{1}{r_3}+\frac{1}{r_5}=\frac{1}{r_2}+\frac{1}{r_4}+\frac{1}{r_6}.$$
\end{theorem}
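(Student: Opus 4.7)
The plan is to mirror the proof of Theorem~\ref{thm:Centroid1}, but apply the excircle formula to the six \emph{small} triangles around $M$ rather than to the six large triangles, as dictated by Configuration~2 in Figure~\ref{fig:configurations}. The six relevant triangles are $\triangle MBD$, $\triangle MDC$, $\triangle MCE$, $\triangle MEA$, $\triangle MAF$, $\triangle MFB$, and a classical property of the centroid is that each of them has area $K/6$, where $K=[ABC]$. This common-area observation is exactly the analogue of the ``each $K_i$ equals $K/2$'' step in the previous proof, and it is the feature that makes this configuration tractable.

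First I would identify which excircle is drawn for each small triangle. By analogy with Theorem~\ref{thm:Centroid1}, where $r_1$ is the excircle of the large triangle $\triangle ABD$ tangent to the side-piece $BD$, in Configuration~2 the circle $r_1$ should be the excircle of $\triangle MBD$ tangent to that same segment $BD$; similarly $r_2,\dots,r_6$ are the excircles of the corresponding small triangles tangent to $DC, CE, EA, AF, FB$ respectively. Letting $\sigma_i$ denote the semiperimeter of the $i$-th small triangle, the excircle formula gives
$$\frac{1}{r_1}=\frac{6(\sigma_1-BD)}{K}=\frac{3(MB+MD-BD)}{K},$$
and five analogous expressions for $1/r_2,\dots,1/r_6$.

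The decisive step is then a bookkeeping one: when we form the difference
$$\Bigl(\tfrac{1}{r_1}+\tfrac{1}{r_3}+\tfrac{1}{r_5}\Bigr)-\Bigl(\tfrac{1}{r_2}+\tfrac{1}{r_4}+\tfrac{1}{r_6}\Bigr),$$
each of the six cevian-segment lengths $MA, MB, MC, MD, ME, MF$ appears exactly twice on each side and cancels in pairs (for instance $MB$ occurs in $1/r_1$ and $1/r_6$, $MD$ in $1/r_1$ and $1/r_2$, and so on around the hexagon of small triangles). What survives is a scalar multiple of $(DC-BD)+(EA-CE)+(FB-AF)$, which is zero because $D$, $E$, $F$ are the midpoints of the sides of $\triangle ABC$. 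No appeal to Lemma~\ref{lemma:perims} is needed; the midpoint identities are used directly.

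The main obstacle I anticipate is simply being careful about which side of each small triangle the excircle touches, so that the pairing of cevian segments in the odd-indexed and even-indexed sums is exactly right to give the cancellation. Once the tangency on a side-piece of $\triangle ABC$ is confirmed from the figure, the algebra collapses immediately, and the theorem follows by the same mechanism as Theorem~\ref{thm:Centroid1}.
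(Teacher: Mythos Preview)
Your proposal is correct and follows essentially the same approach as the paper's proof: identify the six small triangles $BMD,\dots,BMF$, use that each has area $K/6$, apply the excircle formula, and reduce to the midpoint identities. The paper phrases the final step as ``their semiperimeters satisfy equation~(\ref{eq:perims})'' and omits the details, whereas you expand the cancellation explicitly; these are the same argument. One small wording slip: each cevian segment $MA,\dots,MF$ appears exactly \emph{once} on each side of the difference (not twice), as your own example ``$MB$ occurs in $1/r_1$ and $1/r_6$'' correctly illustrates.
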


\bigskip
\begin{figure}[h!t]
\centering
\includegraphics[width=0.45\linewidth]{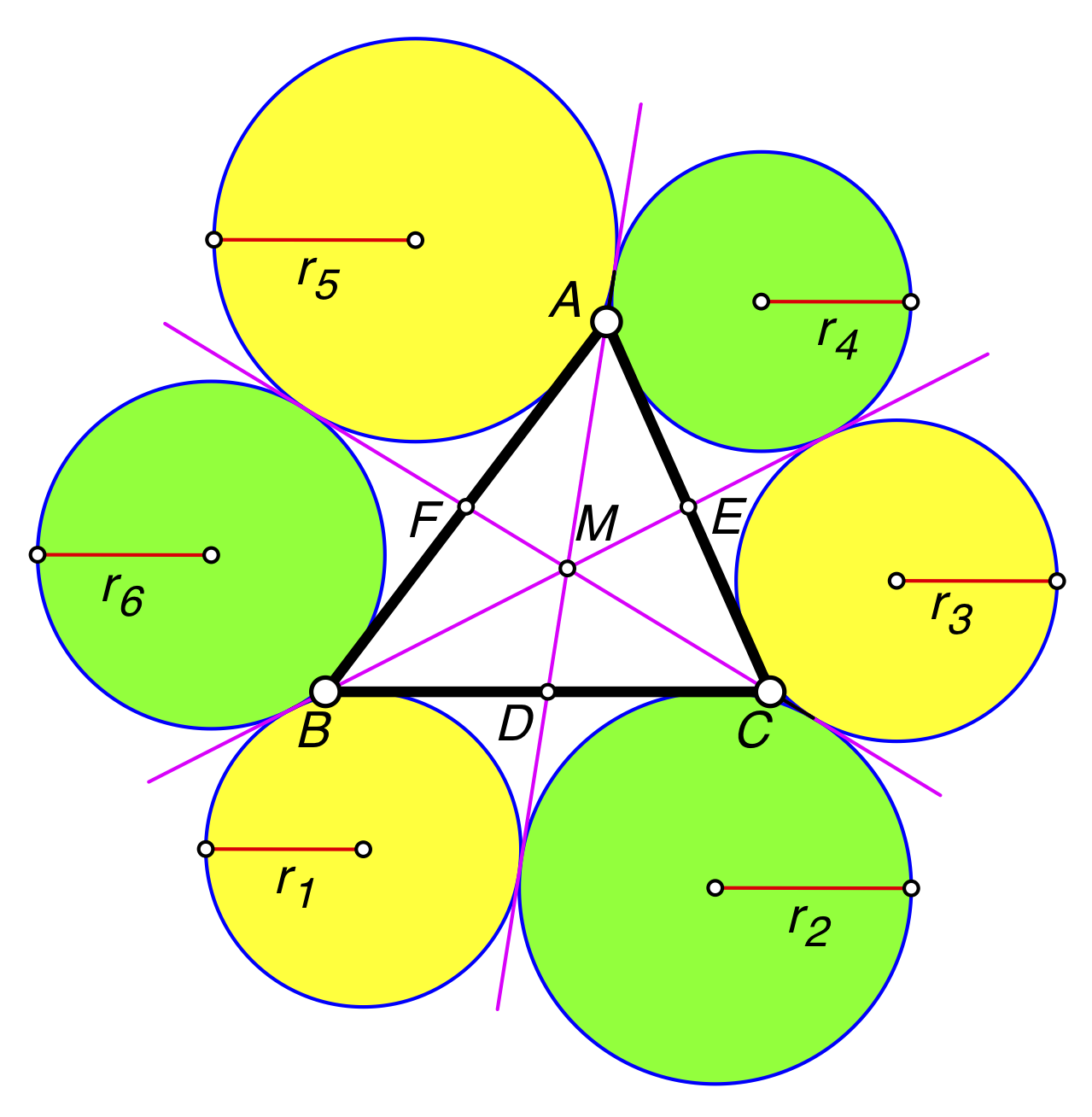}
\caption{}
\label{fig:centroid2}
\end{figure}

\begin{proof}
The proof is similar to the proof of Theorem \ref{thm:Centroid1}.
In this case, the circles are excircles of triangles $BMD$, $CMD$, $CME$, $AME$,
$AMF$, and $BMF$.
These six triangles have the same area and their semiperimeters
satisfy equation (\ref{eq:perims}). The details are omitted.
\end{proof}

\bigskip
\section{The Gergonne Point}

Suppose the incircle of $\triangle ABC$ touches the sides $BC$, $CA$, and $AB$ at points $D$, $E$, and $F$, respectively (Figure \ref{fig:GergonnePoint}). Then the cevians $AD$, $BE$, and $CF$, meet at a point, $G_e$,
known as the Gergonne Point of the triangle \cite[p.~160]{Altshiller-Court}.

\begin{figure}[h!t]
\centering
\includegraphics[width=0.4\linewidth]{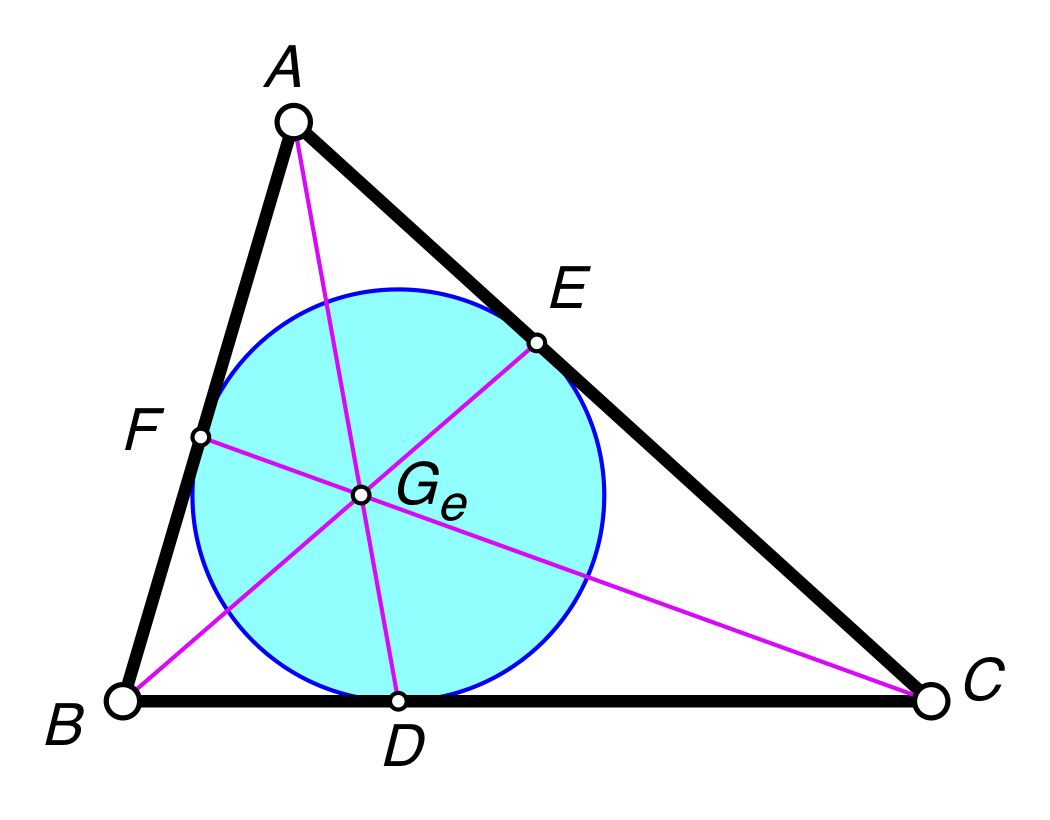}
\caption{Gergonne Point}
\label{fig:GergonnePoint}
\end{figure}

We will now find relationships between certain excircles associated with a
triangle and the three cevians through its Gergonne Point.
We start with a lemma.

\begin{lemma}
\label{lemma:Gergonne}
Let $D$ be the contact point of the incircle of $\triangle ABC$ with side $BC$
(Figure \ref{fig:GergonneLemma}).
The excircle of $\triangle ABD$ that touches side $BD$ has radius $r_1$.
The excircle of $\triangle ADC$ that touches side $DC$ has radius $r_2$.
Then
$$\frac{r_1}{r_2}=\frac{BD}{DC}.$$
\end{lemma}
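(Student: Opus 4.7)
The plan is to apply the exradius formula $r = K/(s-a)$ from the paragraph preceding the lemma to each of the two excircles, and then reduce the claim to a statement about the well-known contact-point distances on side $BC$.

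First I would write down the two exradii explicitly. For $\triangle ABD$ with semiperimeter $s_1=(AB+BD+AD)/2$ and area $K_1=[ABD]$, the excircle opposite $BD$ has radius $r_1 = K_1/(s_1-BD)$. Similarly, for $\triangle ADC$ with semiperimeter $s_2=(AC+DC+AD)/2$ and area $K_2=[ADC]$, the excircle opposite $DC$ has radius $r_2 = K_2/(s_2-DC)$. Next I would use the fact that $\triangle ABD$ and $\triangle ADC$ share the altitude from $A$, which gives $K_1/K_2 = BD/DC$. Dividing then yields
$$\frac{r_1}{r_2} = \frac{K_1}{K_2}\cdot\frac{s_2-DC}{s_1-BD} = \frac{BD}{DC}\cdot\frac{s_2-DC}{s_1-BD}.$$
So the conclusion is equivalent to the identity $s_1-BD = s_2-DC$, which simplifies to $AB-BD = AC-DC$, i.e.\ $AB+DC = AC+BD$.

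The last equality is where the hypothesis on $D$ enters, and this is the one step that needs real input rather than pure symbol-pushing. Since $D$ is the contact point of the incircle with $BC$, the standard tangent-length formulas give $BD = s-b$ and $DC = s-c$, where $s$ is the semiperimeter of $\triangle ABC$ and $b=CA$, $c=AB$. Thus $AB+DC = c+(s-c) = s$ and $AC+BD = b+(s-b) = s$, so both sides equal $s$ and we are done.

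The main obstacle is the observation that the two quantities $s_1-BD$ and $s_2-DC$ turn out to be equal; once one thinks to compare them directly, the remaining work is just the standard identification of $BD$ and $DC$ as tangent lengths from $B$ and $C$ to the incircle. The proof therefore rests on two ingredients only: the exradius formula, and the tangent-length characterization of the contact point.
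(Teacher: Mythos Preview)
Your proof is correct and essentially identical to the paper's: both apply the exradius formula, use the tangent-length identities $BD=s-b$, $DC=s-c$ to show the two denominators $s_1-BD$ and $s_2-DC$ coincide, and then finish with the shared-altitude area ratio $[ABD]/[ADC]=BD/DC$. The only difference is the order of the steps (you take the area ratio first, the paper equalizes the denominators first), which is immaterial.
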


\begin{figure}[h!t]
\centering
\includegraphics[width=0.5\linewidth]{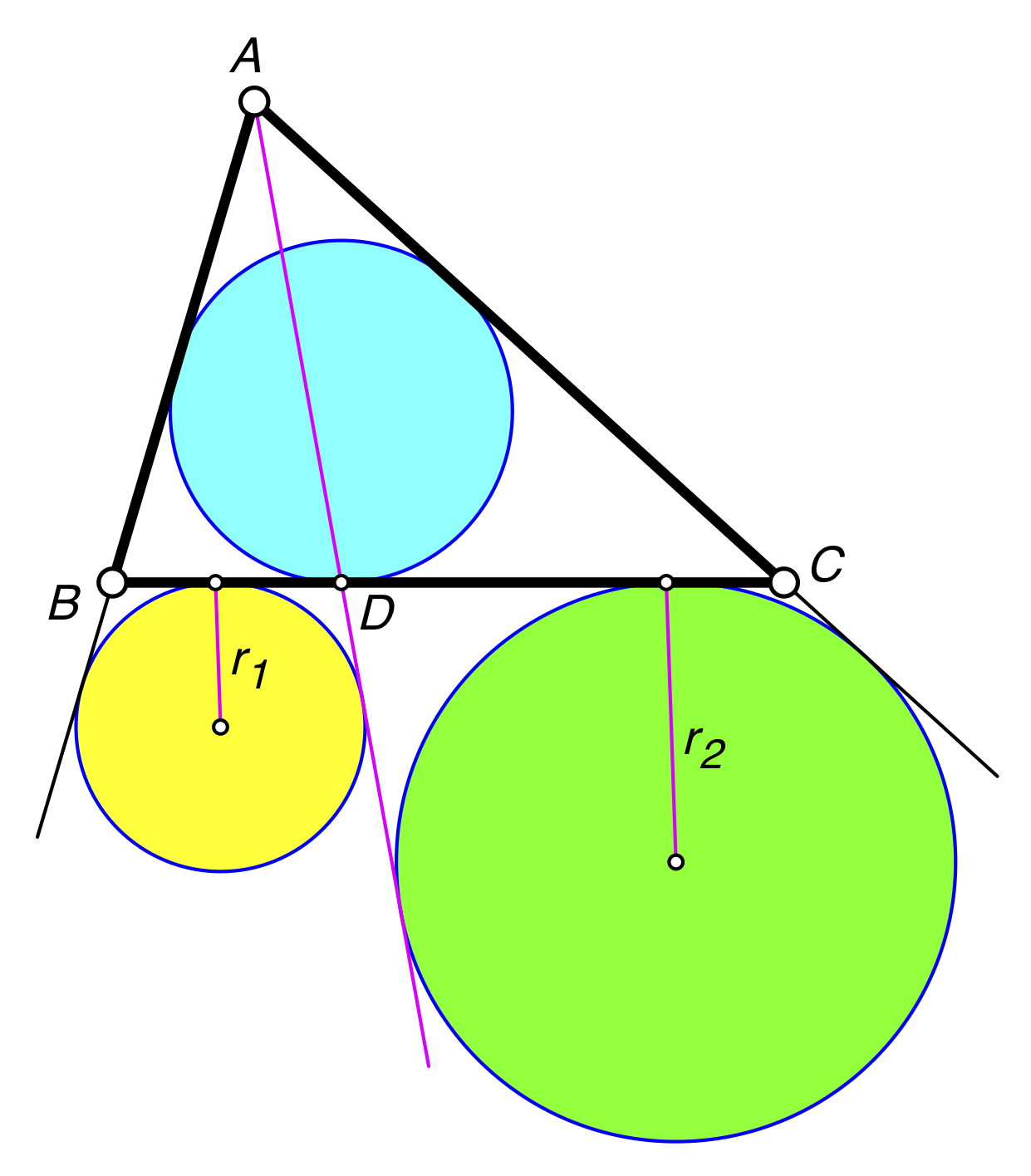}
\caption{$\displaystyle\frac{r_1}{r_2}=\frac{BD}{DC}$}
\label{fig:GergonneLemma}
\end{figure}

The following proof is due to Duca \cite{Duca}.

\begin{proof}
Using the formula for the radius of an excircle, we have
\begin{equation}
\label{eq:GergonneLemma}
r_1=\frac{2[ABD]}{AB+AD-BD}\qquad\textrm{and}\qquad r_2=\frac{2[ADC]}{CA+AD-DC}.
\end{equation}
By a well-known property of the incircle of a triangle \cite[p.~87]{Altshiller-Court},
$BD=s-CA$ and $DC=s-AB$,
where $s=(AB+BC+CA)/2$.
Thus $CA+BD=AB+DC$, which implies $AB+AD-BD=CA+AD-DC$.
Therefore, the two denominators in equation (\ref{eq:GergonneLemma}) are equal.
Hence $r_1/r_2=[ABD]/[ADC]$. Since triangles $ABD$ and $ADC$ have the same altitude from $A$,
the ratio of their areas will be proportional to the ratio of their bases.
Consequently,
$$\frac{r_1}{r_2}=\frac{[ABD]}{[ADC]}=\frac{BD}{DC}.$$
\end{proof}

We can now easily prove the following theorem.

\begin{theorem}
Let $r_1$ through $r_6$ be the radii of six circles tangent to the sides of $\triangle ABC$ and the cevians through the Gergonne point situated as shown in Figure~\ref{fig:Gergonne}.
Then
$r_1r_3r_5=r_2r_4r_6$.
\end{theorem}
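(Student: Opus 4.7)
The plan is to apply Lemma~\ref{lemma:Gergonne} three times, once at each of the three contact points $D$, $E$, $F$, and then exploit the tangent-length equalities at the incircle to clear the resulting ratios.

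At the contact point $D$, the labelling in Configuration~1 is exactly that of the lemma: $r_1$ is the excircle of $\triangle ABD$ touching $BD$ and $r_2$ is the excircle of $\triangle ADC$ touching $DC$, so the lemma gives $r_1/r_2 = BD/DC$. The same setup at the contact point $E$ (with cevian $BE$ splitting $\triangle ABC$ into $\triangle BCE$ and $\triangle BEA$) yields $r_3/r_4 = CE/EA$, and at the contact point $F$ it yields $r_5/r_6 = AF/FB$. Multiplying these three equations gives
\[
\frac{r_1 r_3 r_5}{r_2 r_4 r_6}
= \frac{BD}{DC}\cdot\frac{CE}{EA}\cdot\frac{AF}{FB}.
\]

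To finish, I would observe that the right-hand side is $1$. This is immediate from the standard tangent-length identities for the incircle, namely $BD=BF$, $CE=CD$, and $AF=AE$ (the two tangents from each vertex to the incircle have equal length, a fact already used inside the proof of Lemma~\ref{lemma:Gergonne}). These three equalities give $BD\cdot CE\cdot AF = BF\cdot CD\cdot AE$, so the ratio is $1$ and therefore $r_1 r_3 r_5 = r_2 r_4 r_6$. Alternatively, one could cite Ceva's theorem applied to the concurrent cevians $AD$, $BE$, $CF$, but the tangent-length argument is more elementary and avoids invoking concurrency (which is the defining property of the Gergonne point anyway).

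There is really no serious obstacle here once the lemma is in hand; the only point that requires a moment's care is confirming that the labels $r_3,r_4$ and $r_5,r_6$ in Figure~\ref{fig:Gergonne} really do sit in the configuration of the lemma at $E$ and $F$ respectively (i.e., each is an excircle of the appropriate ``large'' sub-triangle, touching the segment of $BC$, $CA$, or $AB$ cut off by the cevian foot). Once that match is verified from the figure, the three applications of the lemma combine directly with the tangent-length identities to give the desired product formula.
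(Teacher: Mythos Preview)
Your proof is correct and follows essentially the same route as the paper: apply Lemma~\ref{lemma:Gergonne} at each of $D$, $E$, $F$ and multiply. The only cosmetic difference is that the paper dispatches the resulting product $\frac{BD}{DC}\cdot\frac{CE}{EA}\cdot\frac{AF}{FB}$ by citing Ceva's Theorem, whereas you use the incircle tangent-length equalities directly (and mention Ceva as an alternative).
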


\bigskip
\begin{figure}[h!t]
\centering
\includegraphics[width=0.4\linewidth]{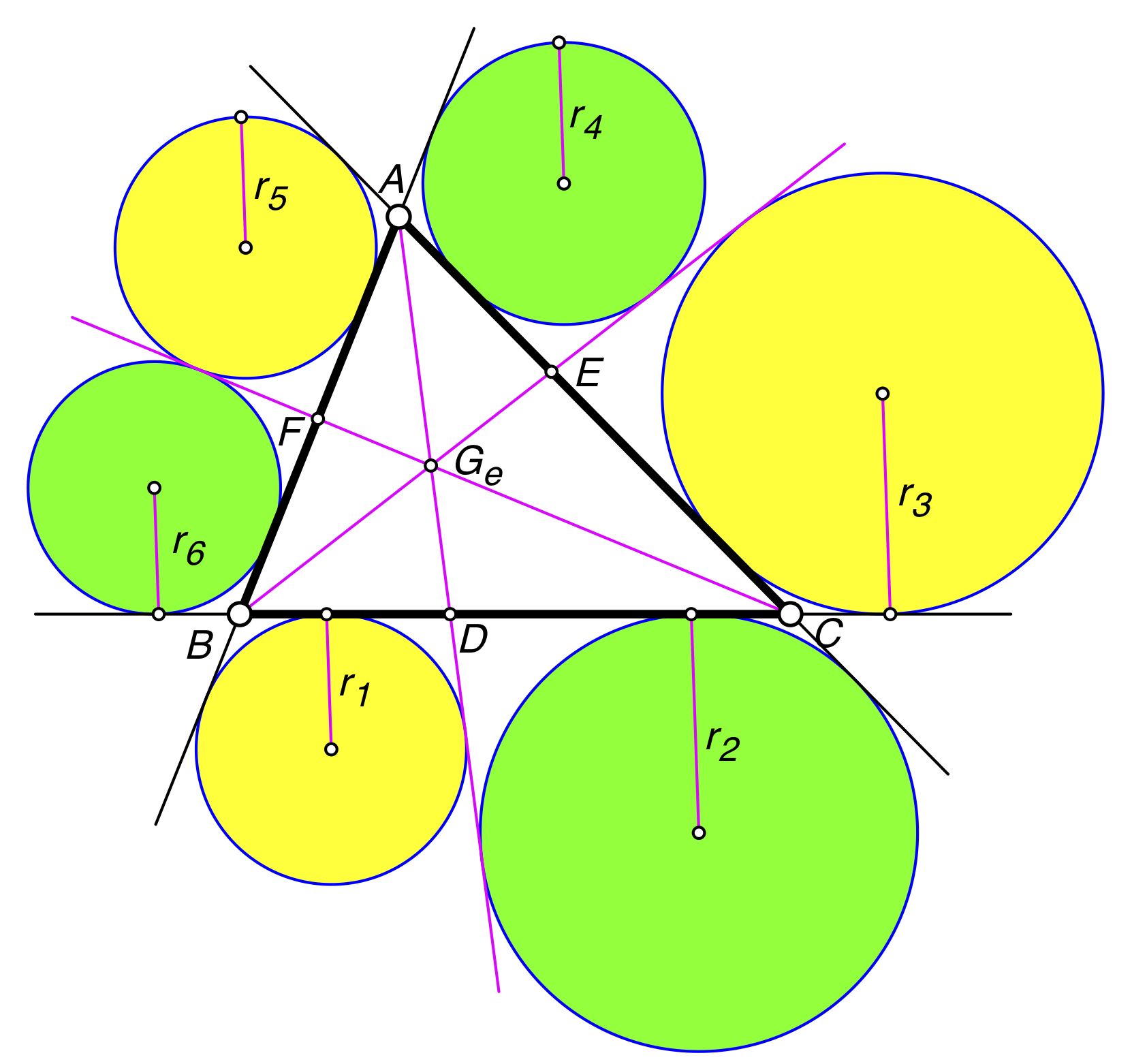}
\caption{$r_1r_3r_5=r_2r_4r_6$}
\label{fig:Gergonne}
\end{figure}

\begin{proof}
By Lemma \ref{lemma:Gergonne},
$$\frac{r_1}{r_2}\cdot\frac{r_3}{r_4}\cdot\frac{r_5}{r_6}=
\frac{BD}{DC}\cdot\frac{CE}{EA}\cdot\frac{AF}{FB}.$$
The expression on the right is equal to 1 by Ceva's Theorem. Thus, we have
$r_1r_3r_5=r_2r_4r_6$.
\end{proof}

\section{The Nagel Point}

\newcommand{\Na}{N_a}

Suppose the excircles of $\triangle ABC$ touch the sides $BC$, $CA$, and $AB$ at points $D$, $E$, and $F$, respectively, as shown in Figure \ref{fig:NagelPoint}. Then the cevians $AD$, $BE$, and $CF$ meet at a point
known as the Nagel Point of the triangle \cite[p.~160]{Altshiller-Court}.

\begin{figure}[h!t]
\centering
\includegraphics[width=0.4\linewidth]{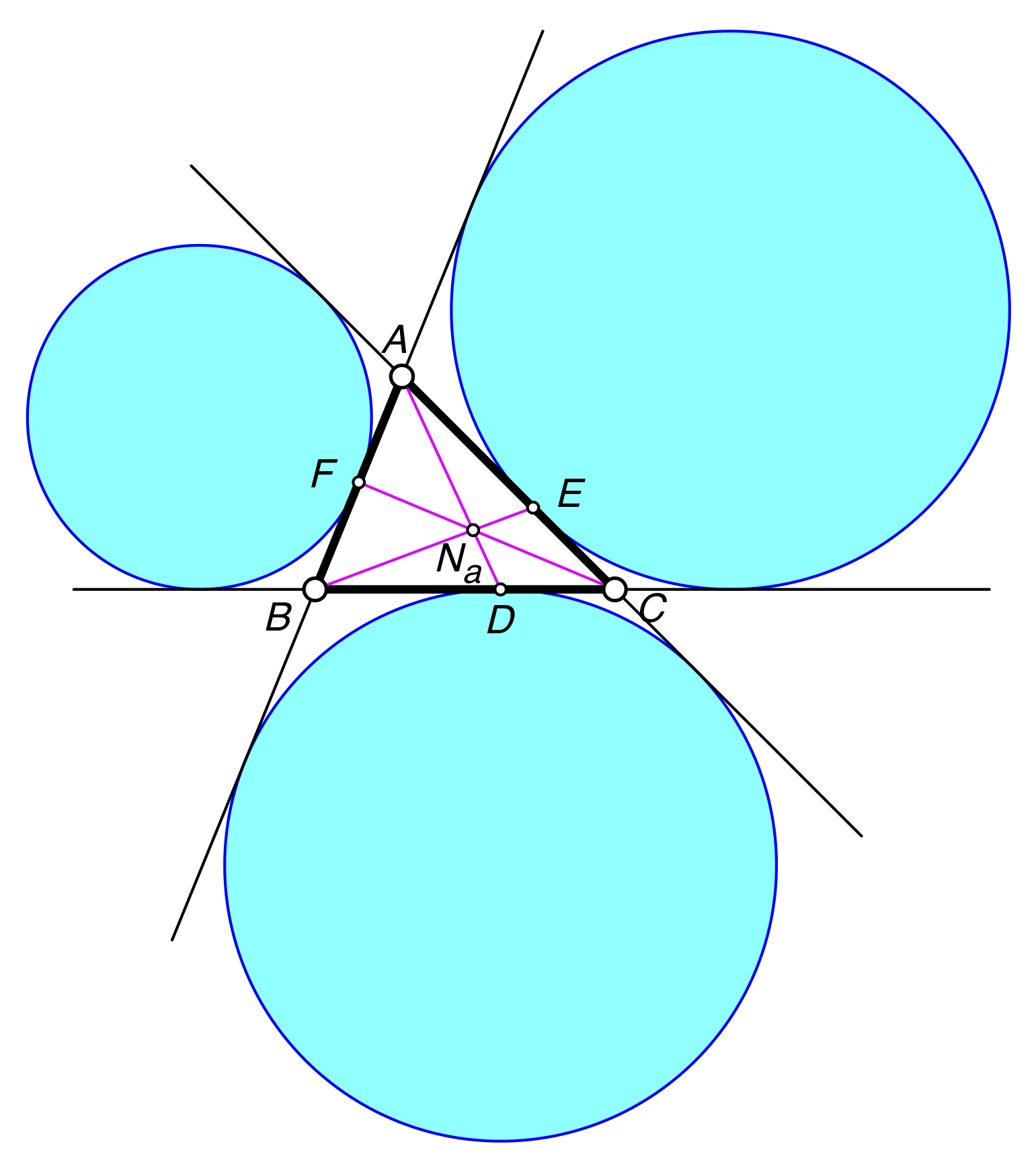}
\caption{Nagel Point}
\label{fig:NagelPoint}
\end{figure}

\begin{lemma}
\label{lemma:1}
Let $\Na$ be the Nagel point of $\triangle ABC$. The cevians $AD$, $BE$, and $CF$
through $\Na$ divide $\triangle ABC$ into six small triangles numbered from 1 to 6 as shown
in Figure \ref{fig:Nagel-triangles}. Let $K_i$ be the area of triangle $i$.
Then $K_1K_3K_5=K_2K_4K_6$.
\end{lemma}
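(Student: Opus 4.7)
The plan is to express the ratio $K_i/K_{i+1}$ for each consecutive pair of the six triangles as a ratio of segment lengths on the sides of $\triangle ABC$, then to combine these three ratios using Ceva's theorem applied to the cevians through $\Na$.

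The key observation is that in the cyclic labelling suggested by Figure~\ref{fig:Nagel-triangles}, the six small triangles come in three adjacent pairs, each pair sharing $\Na$ as a common vertex and having opposite sides that lie on the same side of $\triangle ABC$. For instance, triangles $1$ and $2$ both have $\Na$ as a vertex, and their opposite sides $BD$ and $DC$ lie on line $BC$. They therefore share the altitude from $\Na$ to line $BC$, which gives $K_1/K_2 = BD/DC$. Analogous statements hold for the pairs $(K_3,K_4)$ and $(K_5,K_6)$ on sides $CA$ and $AB$, yielding
\[
\frac{K_1 K_3 K_5}{K_2 K_4 K_6} \;=\; \frac{BD}{DC}\cdot\frac{CE}{EA}\cdot\frac{AF}{FB}.
\]

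Since $AD$, $BE$, $CF$ concur at the Nagel point $\Na$, Ceva's theorem implies that the right-hand side equals $1$, giving $K_1 K_3 K_5 = K_2 K_4 K_6$. I expect no real obstacle here: the argument does not actually use any special property of $\Na$ beyond the concurrence of the three cevians, so the identity in fact holds for any interior cevian point. The only matter requiring care is verifying that the numbering in Figure~\ref{fig:Nagel-triangles} is the cyclic one described above, so that consecutive indices $(1,2)$, $(3,4)$, $(5,6)$ really correspond to the three pairs of triangles sharing a base on a single side of $\triangle ABC$.
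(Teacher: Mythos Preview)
Your argument is correct. The paper's own proof is simply a one-line citation: it notes that the identity is a special case of Theorem~7.4 from \cite{Rabinowitz}, which establishes $K_1K_3K_5=K_2K_4K_6$ for the six triangles cut off by the cevians through \emph{any} interior point. Your Ceva-based computation is exactly the standard proof of that general fact, and your observation that nothing special about $\Na$ is used is on the mark. Regarding the labelling concern, the paper's later Lemma~\ref{lemma:3} uses the expressions $s_1-BD$, $s_2-DC$, $s_3-CE$, $s_4-EA$, $s_5-AF$, $s_6-FB$, confirming that triangles $1,2$ sit on $BC$, triangles $3,4$ on $CA$, and triangles $5,6$ on $AB$, just as you assumed.
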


\begin{figure}[h!t]
\centering
\includegraphics[width=0.38\linewidth]{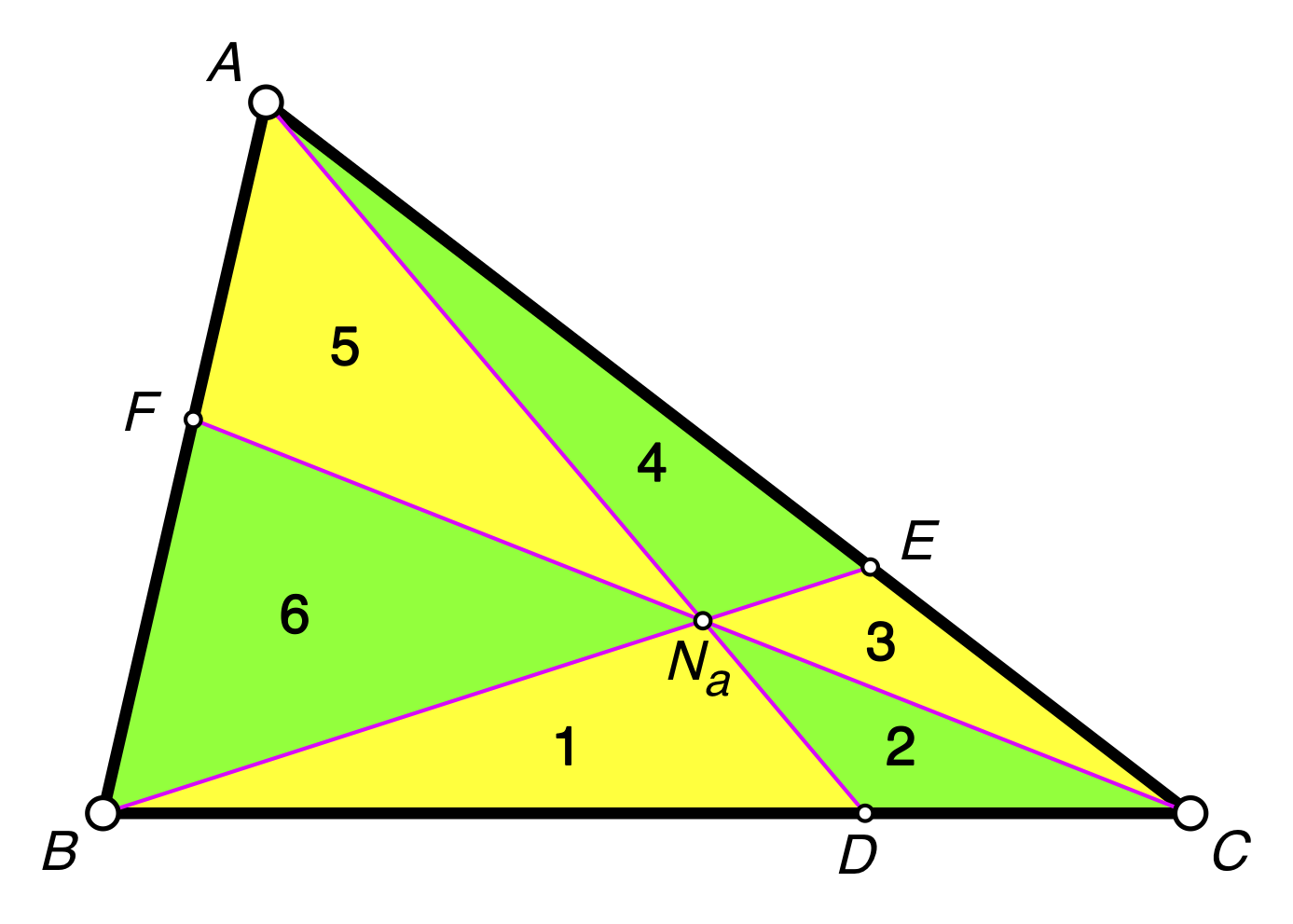}
\caption{Nagel Point}
\label{fig:Nagel-triangles}
\end{figure}

\begin{proof}
This is a special case of Theorem 7.4 from \cite{Rabinowitz}.
\end{proof}

\begin{lemma}
\label{lemma:2}
Let $\Na$ be the Nagel point of $\triangle ABC$. The cevians $AD$, $BE$, and $CF$
through $\Na$ divide $\triangle ABC$ into six small triangles numbered from 1 to 6 as shown
in Figure \ref{fig:Nagel-triangles}. Let $s_i$ be the semiperimeter of triangle $i$.
Then $s_1s_3s_5=s_2s_4s_6$.
\end{lemma}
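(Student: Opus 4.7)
The plan is to reduce the identity to a polynomial identity in the side-lengths and cevian-lengths of $\triangle ABC$, then verify it via Stewart's theorem. First, I would exploit the defining property of the Nagel point: since $D$, $E$, $F$ are the touch-points of the three excircles, the cevians through $\Na$ are \emph{splitters}, giving
\[ BD = EA = s-c, \quad DC = AF = s-b, \quad CE = FB = s-a. \]
The barycentric coordinates $\Na = (s-a : s-b : s-c)$ further determine the position of $\Na$ along each cevian: writing $X = AD$, $Y = BE$, $Z = CF$, one has $A\Na = aX/s$ and $\Na D = (s-a)X/s$, with analogous formulas for $B\Na$, $\Na E$, $C\Na$, $\Na F$. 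Substituting these into the expressions for the semi-perimeters yields six explicit formulas such as
\[ 2s\cdot s_1 = s(s-c) + (s-a)X + bY, \qquad 2s\cdot s_4 = s(s-c) + (s-b)Y + aX, \]
and four analogues obtained by cycling $(A,B,C)$.

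With these expressions in hand, the difference $s_1 s_3 s_5 - s_2 s_4 s_6$ becomes a polynomial in $X$, $Y$, $Z$. The terms of degree $0$ cancel identically, since $BD\cdot CE\cdot AF = DC\cdot EA\cdot FB = (s-a)(s-b)(s-c)$. The terms of degree $2$ also cancel identically by direct expansion, using the algebraic identity $ab - (s-a)(s-b) = s(s-c)$ and its cyclic variants. The terms of degree $1$ and degree $3$ do not vanish individually, but they cancel against one another after rewriting $X^2$, $Y^2$, $Z^2$ via Stewart's theorem. The cleanest form needed is
\[ b(s-b)Y^2 - c(s-c)Z^2 = s^2(s-a)(c-b), \]
together with its two cyclic variants; each follows by a short manipulation from Stewart's formula $aX^2 = (s-c)b^2 + (s-b)c^2 - a(s-b)(s-c)$.

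The main obstacle is the bookkeeping of this expansion, but once the terms are organized by degree in $X, Y, Z$ the cancellations fall into place transparently: the degree-one coefficient of $X$ in $s^3(s_1 s_3 s_5 - s_2 s_4 s_6)$ is $s^2(s-a)(2a-s)(b-c)$, which is cancelled exactly by the degree-three contributions in $X^2 Y$ and $XZ^2$ after the Stewart substitution, and analogously for $Y$ and $Z$. Combined with Lemma~\ref{lemma:1}, the present lemma yields, via $K_i = r_i s_i$, the striking consequence that the inradii of the six sub-triangles also satisfy $r_1 r_3 r_5 = r_2 r_4 r_6$.
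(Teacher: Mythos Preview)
Your proposal follows the same overall strategy as the paper---compute all segment lengths (the touch-point lengths, the cevian lengths via Stewart, and the division ratios $A\Na/\Na D=a/(s-a)$, etc.), substitute into the $s_i$, and verify that $s_1s_3s_5-s_2s_4s_6$ vanishes. The difference is that the paper stops at this point and hands the expression to a computer algebra system, whereas you organise the expansion of $(2s)^3(s_1s_3s_5-s_2s_4s_6)$ by total degree in $X,Y,Z$ and kill each piece by hand: degree~$0$ and degree~$2$ cancel identically (the latter via $ab-(s-a)(s-b)=s(s-c)$), and the degree~$1$ and degree~$3$ pieces annihilate one another after Stewart, thanks to your cyclic identity $b(s-b)Y^2-c(s-c)Z^2=s^2(s-a)(c-b)$. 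I checked these cancellations and they are correct; your argument thus gives a computer-free proof of the lemma, which is precisely what the paper's Open Question~1 asks for.

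Two small slips to fix. First, since $2s\cdot s_i$ is the linear form, the polynomial you are expanding is $(2s)^3(s_1s_3s_5-s_2s_4s_6)=8s^3(s_1s_3s_5-s_2s_4s_6)$, not $s^3(\cdots)$; your displayed coefficient $s^2(s-a)(2a-s)(b-c)$ is correct for the former. Second, after Stewart the monomials that contribute to the $X$-coefficient are $XY^2$ and $XZ^2$ (replacing $Y^2$ and $Z^2$ by constants), not $X^2Y$ and $XZ^2$; indeed the $XY^2$ coefficient $b(s-b)(2a-s)$ and the $XZ^2$ coefficient $c(s-c)(s-2a)$ combine via your identity to give $(2a-s)\bigl[b(s-b)Y^2-c(s-c)Z^2\bigr]/1=s^2(s-a)(2a-s)(c-b)$, which cancels the degree-one term exactly.
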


\begin{proof}
Let $a=BC$, $b=CA$, $c=AB$, and $s=(a+b+c)/2$.
It is well-known that $AF=DC=s-b$, $FB=CE=s-a$, and $BD=EA=s-c$ \cite[p.~88]{Altshiller-Court}.
By Stewart's Theorem, we can compute the lengths of cevians $AD$, $BE$, and $CF$ in $\triangle ABC$. We get
$$
AD=\sqrt{s^2-\frac{4K^2}{a(s-a)}},\quad
BE=\sqrt{s^2-\frac{4K^2}{b(s-b)}},\quad
CF=\sqrt{s^2-\frac{4K^2}{c(s-c)}},
$$
where $K$ is the area of $\triangle ABC$ and $s$ is its semiperimeter.
We can then use the Theorem of Menelaus on $\triangle ADC$ with traversal $BE$
to find the ratio of $A\Na$ to $\Na D$. We can use the same procedure to find how $\Na$
divides the other cevians. We get
$$
\frac{A\Na}{\Na D}=\frac{a}{s-a},\quad
\frac{B\Na}{\Na E}=\frac{b}{s-b},\quad
\frac{C\Na}{\Na F}=\frac{c}{s-c}.\quad
$$

These formulas can also be found in \cite{Krishna}.
This allows us to compute the lengths of $A\Na$, $\Na D$, $B\Na$, $\Na E$, $C\Na$, and $\Na F$
in terms of $a$, $b$, $c$, $s$, and $K$. This, in turn, gives us expressions
for $s_1$, $s_2$, $s_3$, $s_4$, $s_5$, and $s_6$.
Simplifying $s_1s_3s_5-s_2s_4s_6$ using a computer algebra system shows that the result is 0.
\end{proof}

\begin{theorem}
\label{thm:Nagel1}
Let $\Na$ be the Nagel point of $\triangle ABC$. The cevians $AD$, $BE$, and $CF$
through $\Na$ divide $\triangle ABC$ into six small triangles numbered from 1 to 6 as shown
in Figure \ref{fig:Nagel-triangles}. Let $r_i$ be the radius of the incircle of triangle $i$.
Then $r_1r_3r_5=r_2r_4r_6$.
\end{theorem}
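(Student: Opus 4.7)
The plan is to combine the two preceding lemmas using the standard formula $r = K/s$ for the inradius of a triangle, where $K$ is its area and $s$ is its semiperimeter. This reduces the statement to a one-line computation.

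First I would note that since each of the six small triangles is a genuine (non-degenerate) triangle, we may apply the inradius formula to each, giving $r_i = K_i/s_i$ for $i=1,\dots,6$, where $K_i$ and $s_i$ denote the area and semiperimeter of triangle number $i$ in Figure~\ref{fig:Nagel-triangles}. Multiplying the three formulas with odd indices and the three with even indices separately, I would obtain
\[
r_1 r_3 r_5 \;=\; \frac{K_1 K_3 K_5}{s_1 s_3 s_5}
\qquad\text{and}\qquad
r_2 r_4 r_6 \;=\; \frac{K_2 K_4 K_6}{s_2 s_4 s_6}.
\]

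Next, I would invoke Lemma~\ref{lemma:1}, which gives $K_1 K_3 K_5 = K_2 K_4 K_6$, and Lemma~\ref{lemma:2}, which gives $s_1 s_3 s_5 = s_2 s_4 s_6$. Substituting these two identities into the displayed expressions yields at once $r_1 r_3 r_5 = r_2 r_4 r_6$, completing the proof.

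There is really no obstacle here: the heavy lifting has been done in Lemmas~\ref{lemma:1} and~\ref{lemma:2}, and the present theorem is essentially a formal consequence. The only thing worth checking carefully is that the six triangles are indexed consistently in both lemmas and in the theorem, so that the ``odd'' and ``even'' groupings match; once the labeling from Figure~\ref{fig:Nagel-triangles} is used throughout, the argument is immediate.
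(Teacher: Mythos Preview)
Your proof is correct and follows essentially the same approach as the paper: apply $r_i = K_i/s_i$ to each of the six triangles, then invoke Lemma~\ref{lemma:1} and Lemma~\ref{lemma:2} to equate the two products. The paper's proof is just a one-line version of exactly this argument.
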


\begin{proof}
Using the notation from Lemmas \ref{lemma:1} and \ref{lemma:2}, we have $r_i=K_i/s_i$, so
$$r_1r_3r_5=\frac{K_1}{s_1}\frac{K_3}{s_3}\frac{K_5}{s_5}=\frac{K_2}{s_2}\frac{K_4}{s_4}\frac{K_6}{s_6}=r_2r_4r_6$$
as required.
\end{proof}

A similar result holds for excircles.

\begin{lemma}
\label{lemma:3}
Let $\Na$ be the Nagel point of $\triangle ABC$. The cevians $AD$, $BE$, and $CF$
through $\Na$ divide $\triangle ABC$ into six small triangles numbered from 1 to 6 as shown
in Figure \ref{fig:Nagel-triangles}. Let $s_i$ be the semiperimeter of triangle $i$.
Then $$(s_1-BD)(s_3-CE)(s_5-AF)=(s_2-DC)(s_4-EA)(s_6-FB).$$
\end{lemma}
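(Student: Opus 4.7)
The plan is to follow exactly the template used in the proof of Lemma~\ref{lemma:2}: convert the desired identity into an algebraic statement in $a$, $b$, $c$, $s$, and $K$, and then let a computer algebra system finish the verification.

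First, I would identify the six small triangles at $\Na$ from Figure~\ref{fig:Nagel-triangles}. Using the labelling convention consistent with Lemma~\ref{lemma:perims}, triangle $i$ is the one containing the side-segment labelled $i$, so the relevant triangles are $\triangle B\Na D$, $\triangle D\Na C$, $\triangle C\Na E$, $\triangle E\Na A$, $\triangle A\Na F$, $\triangle F\Na B$, and the ``relevant'' side in the claimed identity is in each case a sub-segment of $BC$, $CA$, or $AB$. Since each small triangle has exactly two sides which are cevian segments and one which is a sub-segment of $\triangle ABC$, the expression $s_i-(\text{relevant side})$ is simply one half the sum of the two cevian pieces minus that side. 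For instance, $2(s_1-BD)=B\Na+\Na D-BD$, and analogously for the other five factors.

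Next, I would invoke the machinery already assembled in Lemma~\ref{lemma:2}. From
$$BD=EA=s-c,\quad DC=AF=s-b,\quad CE=FB=s-a,$$
the Stewart formulas for $AD$, $BE$, $CF$, and the Menelaus ratios
$$\frac{A\Na}{\Na D}=\frac{a}{s-a},\quad\frac{B\Na}{\Na E}=\frac{b}{s-b},\quad\frac{C\Na}{\Na F}=\frac{c}{s-c},$$
one obtains $A\Na=(a/s)AD$, $\Na D=((s-a)/s)AD$, and analogous expressions on the other two cevians. Substituting these into the six expressions for $2(s_i-(\cdot))$ gives explicit formulas in $a,b,c,s,K$, each one a linear combination of the three cevian lengths $AD,BE,CF$ together with a constant term. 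Forming the products $L=(s_1-BD)(s_3-CE)(s_5-AF)$ and $R=(s_2-DC)(s_4-EA)(s_6-FB)$ and simplifying (using Heron's relation $K^2=s(s-a)(s-b)(s-c)$, which turns $AD^2$, $BE^2$, $CF^2$ into rational expressions in $a,b,c$) reduces the claim to an algebraic identity.

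The main obstacle is the mixed radicals: because $AD$, $BE$, $CF$ are themselves square roots that cannot all be eliminated simultaneously, the product $L-R$ is not a polynomial in $a,b,c$ but an expression in three independent surds. To verify it is zero one has to collect terms according to which product of radicals $AD^{\varepsilon_1}BE^{\varepsilon_2}CF^{\varepsilon_3}$ ($\varepsilon_j\in\{0,1\}$) they contain and show that each of the eight resulting coefficients vanishes as a rational function of $a,b,c$. This is precisely the kind of bookkeeping a computer algebra system handles automatically, so as in Lemma~\ref{lemma:2} I would state that a direct CAS computation shows $L-R=0$, completing the proof.
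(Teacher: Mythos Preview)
Your proposal is correct and follows essentially the same approach as the paper: reuse the side-segment, Stewart, and Menelaus computations from Lemma~\ref{lemma:2} to express every length in terms of $a,b,c,s,K$, then let a computer algebra system verify that the difference of the two products vanishes. Your write-up is in fact more detailed than the paper's (which simply says the computation is the same as in Lemma~\ref{lemma:2}), but the method is identical.
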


\begin{proof}
The proof is essentially the same as the proof of Lemma \ref{lemma:2}.
Expressions for all the needed lengths have already been found in terms of
$a$, $b$, $c$, $s$, and $K$.
Simplifying $(s_1-BD)(s_3-CE)(s_5-AF)-(s_2-DC)(s_4-EA)(s_6-FB)$ using a computer algebra system shows that the result is 0.
\end{proof}

\begin{theorem}
\label{thm:Nagel2}
Let $r_1$ through $r_6$ be the radii of six circles tangent to the sides of $\triangle ABC$ and the cevians through the Nagel point situated as shown in Figure~\ref{fig:Nagel}.
Then
$r_1r_3r_5=r_2r_4r_6$.
\end{theorem}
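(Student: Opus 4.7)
The plan is to imitate the proof of Theorem \ref{thm:Nagel1} almost verbatim, replacing the incircle formula $r_i=K_i/s_i$ by the excircle formula $r_i=K_i/(s_i-a_i)$, where $a_i$ denotes the side of small triangle $i$ that lies along the boundary of $\triangle ABC$. Reading Figure~\ref{fig:Nagel} together with the conventions of Figure~\ref{fig:Nagel-triangles}, the relevant boundary segments are $BD, DC, CE, EA, AF, FB$ for triangles $1$ through $6$ respectively, so the circle of radius $r_i$ is the excircle of triangle $i$ opposite the Nagel point, tangent to the appropriate side of $\triangle ABC$ and to the extensions of the two cevians bounding that small triangle.

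First I would write $r_1=K_1/(s_1-BD)$, $r_3=K_3/(s_3-CE)$, $r_5=K_5/(s_5-AF)$ and the analogous expressions for $r_2,r_4,r_6$. Taking products yields
\[
r_1r_3r_5=\frac{K_1K_3K_5}{(s_1-BD)(s_3-CE)(s_5-AF)},\qquad r_2r_4r_6=\frac{K_2K_4K_6}{(s_2-DC)(s_4-EA)(s_6-FB)}.
\]
Lemma~\ref{lemma:1} equates the two numerators and Lemma~\ref{lemma:3} equates the two denominators, so $r_1r_3r_5=r_2r_4r_6$ as required.

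There is essentially no obstacle left, because the substantive work has already been absorbed into Lemmas~\ref{lemma:1} and~\ref{lemma:3}, whose proofs rely on Stewart's theorem, Menelaus's theorem, and a computer algebra verification. The only thing I would take care to verify is the identification, from the figure, that each $r_i$ really is the excircle opposite the Nagel point (rather than one of the other two excircles of the small triangle); once that is confirmed, the theorem is immediate from the one-line computation above.
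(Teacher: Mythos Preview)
Your proposal is correct and follows essentially the same route as the paper: the paper's proof simply states that one repeats the argument of Theorem~\ref{thm:Nagel1} with the excircle formula $r_i=K_i/(s_i-a_i)$, where $a_i$ is the side of triangle $i$ lying along $\triangle ABC$, and then appeals (implicitly) to Lemmas~\ref{lemma:1} and~\ref{lemma:3}. Your write-up is in fact more explicit than the paper's, spelling out the products and the identification of each $a_i$.
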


\begin{figure}[h!t]
\centering
\includegraphics[width=0.4\linewidth]{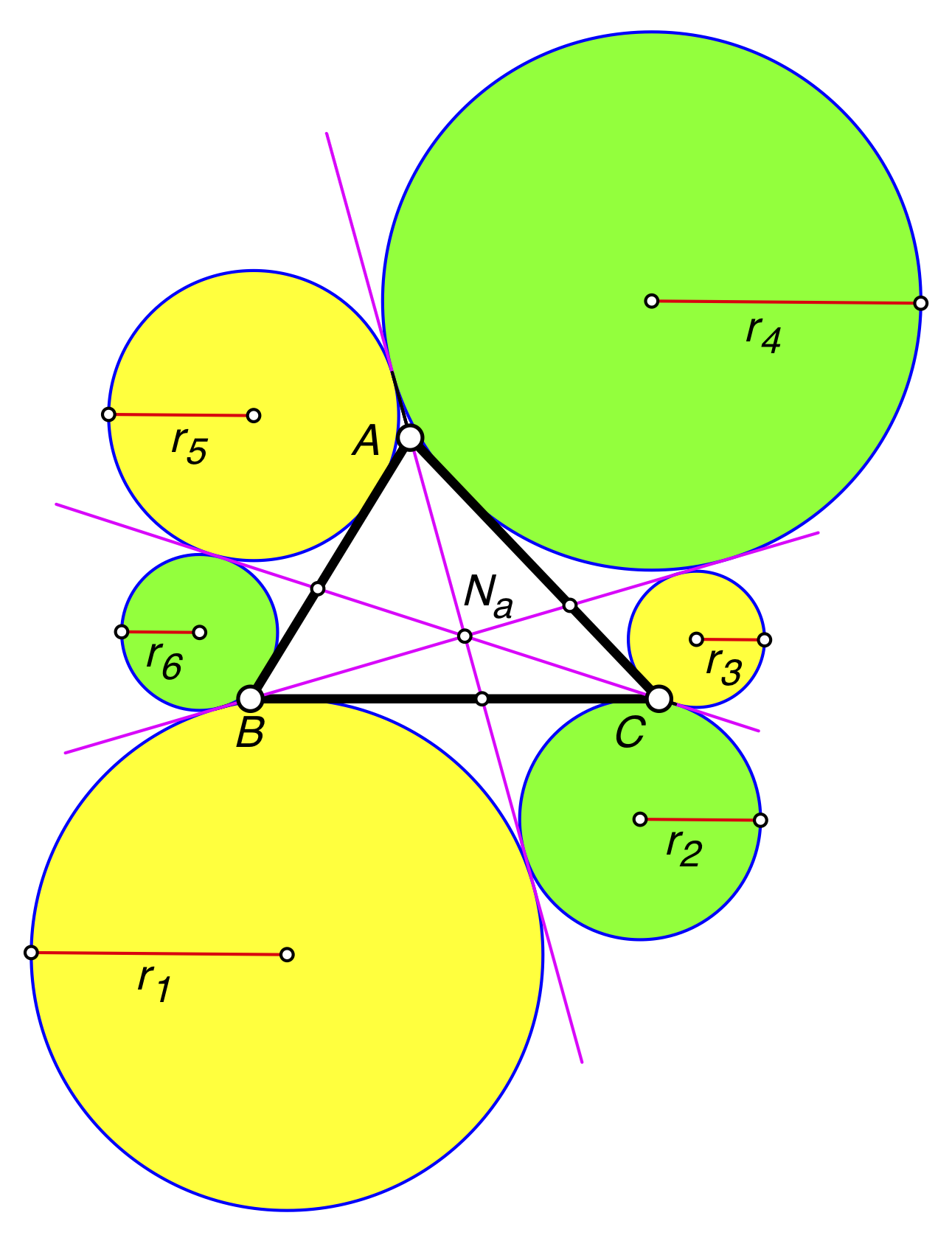}
\caption{$r_1r_3r_5=r_2r_4r_6$}
\label{fig:Nagel}
\end{figure}

\begin{proof}
The proof is the same as the proof of Theorem \ref{thm:Nagel1} only now
$\displaystyle r_i=\frac{K_i}{(s-a_i)}$
where $a_i$ is the length of the side of triangle $i$ lying along a side of $\triangle ABC$.
\end{proof}

The results of Theorems \ref{thm:Nagel1} and \ref{thm:Nagel2} are so elegant
that it is unlikely that they are true only because the complicated expressions
found in the proofs of Lemmas \ref{lemma:2} and \ref{lemma:3} just happen to
simplify to 0.

\begin{open}
Are there simple proofs of Theorems \ref{thm:Nagel1} and \ref{thm:Nagel2}
that do not involve a large amount of algebraic computation requiring
computer simplification?
\end{open}

\bigskip
\section{Relationship Between Inradii and Exradii}

We can make use of relationships between the radii of incircles
associated with a figure to find relationships between the radii of
associated excircles.

The following basic result was known to Japanese geometers of the Edo period
as evidenced by the fact that it is equivalent to a problem found in the 1823 text, \textit{Sangaku Shousen}
\cite{Ushijima}, later printed as problem 4.3.3 in \cite[p.~21]{Fukagawa}.

\begin{theorem}[Inradius/Exradius Invariant]
Let $A$ be a fixed point and let $L$ be a fixed line that does not pass through $A$.
Let $B$ and $C$ be variable points on $L$, with $B\neq C$.
Let $r$ be the inradius of $\triangle ABC$ and let $r_a$ be the radius of the excircle
that touches side $BC$ (Figure \ref{fig:inradius-exradius}). Then $\displaystyle\frac{1}{r}-\frac{1}{r_a}$
remains invariant as $B$ and $C$ vary along $L$.
\end{theorem}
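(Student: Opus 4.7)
The plan is to compute $\frac{1}{r}-\frac{1}{r_a}$ directly using the standard formulas for the inradius and the exradius and show that it reduces to a quantity depending only on $A$ and $L$.

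First I would let $a=BC$, and let $K$ denote the area of $\triangle ABC$ and $s$ its semiperimeter. Invoking the two formulas $r=K/s$ and $r_a=K/(s-a)$ (the latter is the exradius formula already quoted from Altshiller-Court just before Theorem \ref{thm:Centroid1}), I can write
$$\frac{1}{r}-\frac{1}{r_a}=\frac{s}{K}-\frac{s-a}{K}=\frac{a}{K}.$$

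Next I would rewrite $K$ in terms of the base $a$ and the altitude from $A$. Let $h$ be the distance from the fixed point $A$ to the fixed line $L$. Since $B$ and $C$ lie on $L$, the altitude from $A$ in $\triangle ABC$ has length $h$, independent of the choice of $B$ and $C$. Hence $K=\tfrac{1}{2}ah$, and substituting into the expression above gives
$$\frac{1}{r}-\frac{1}{r_a}=\frac{a}{\tfrac{1}{2}ah}=\frac{2}{h}.$$

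Since $h$ depends only on $A$ and $L$, both of which are fixed, the quantity $\frac{1}{r}-\frac{1}{r_a}$ is invariant as $B$ and $C$ move along $L$. There is really no serious obstacle here; the only thing to be slightly careful about is that the cancellation of $a$ in the numerator and denominator requires $B\ne C$, which is included in the hypotheses, and that the altitude from $A$ to line $BC$ coincides with the perpendicular distance from $A$ to $L$, which holds because $B,C\in L$.
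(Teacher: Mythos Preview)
Your proof is correct and essentially identical to the paper's own argument: both use $r=K/s$, $r_a=K/(s-a)$ to get $\frac{1}{r}-\frac{1}{r_a}=\frac{a}{K}=\frac{2}{h}$, where $h$ is the fixed distance from $A$ to $L$. The only difference is that you spell out the intermediate step $K=\tfrac{1}{2}ah$ and note the need for $B\neq C$, which the paper leaves implicit.
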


\begin{figure}[h!t]
\centering
\includegraphics[width=0.5\linewidth]{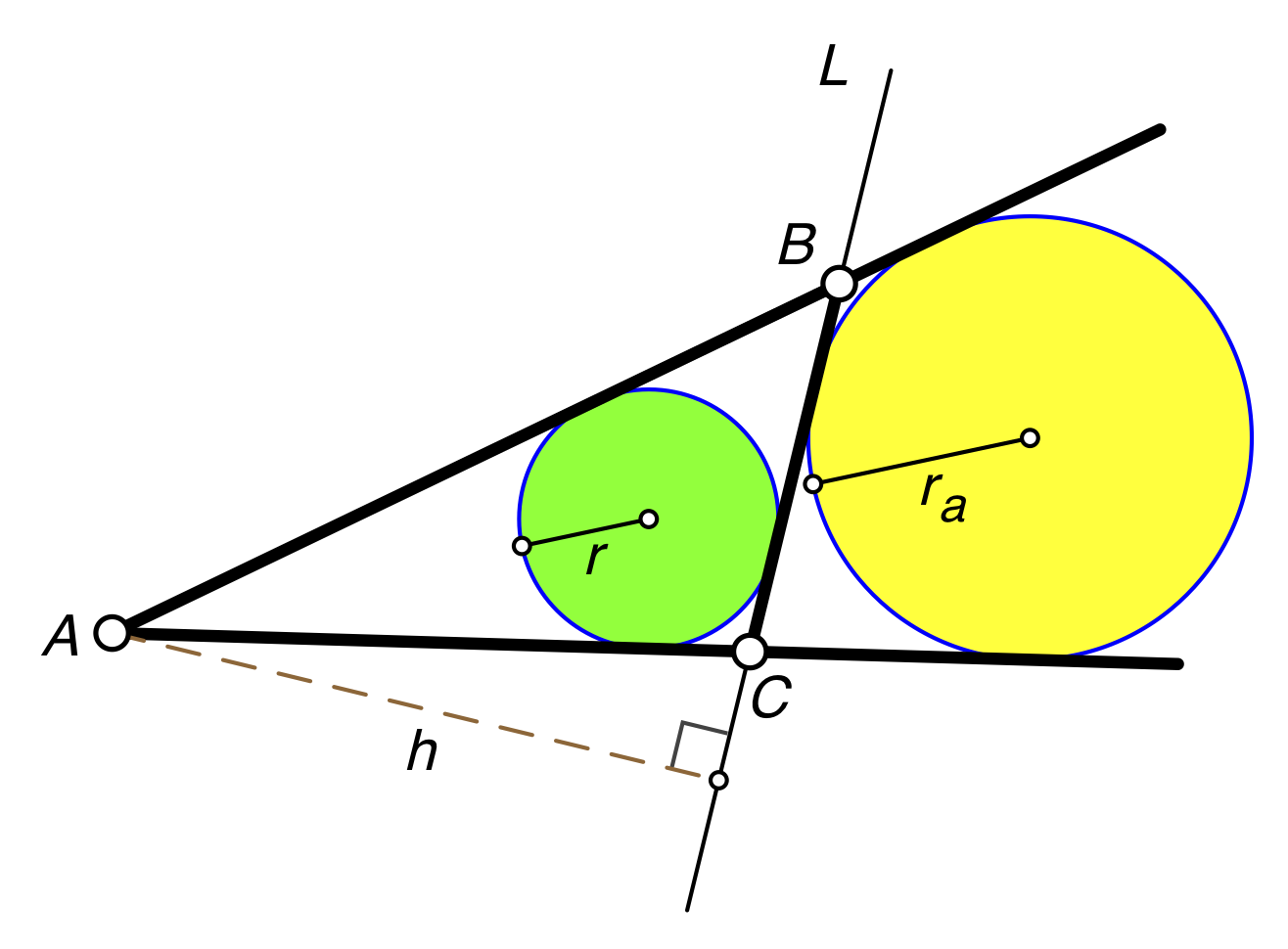}
\caption{inradius and exradius}
\label{fig:inradius-exradius}
\end{figure}

\begin{proof}
From the formulas for the length of an inradius and an exradius, we have
$$r=\frac{K}{s}\quad\mathrm{and}\quad r_a=\frac{K}{s-a},$$
where $a$ is the length of $BC$, $K$ is the area of $\triangle ABC$, and $s$ is its semiperimeter.
Thus $$\frac{1}{r}-\frac{1}{r_a}=\frac{s}{K}-\frac{s-a}{K}=\frac{a}{K}=\frac{2}{h},$$
where $h$ is the distance from $A$ to $L$. This proves the theorem since $h$ remains fixed as $B$ and $C$ vary along $L$.
\end{proof}

The following result follows immediately.

\begin{theorem}[Relationship Between Two Incircles and Two Excircles]
\label{thm:in-ex}
Let $AD$ be a cevian of $\triangle ABC$.
Four circles are tangent to the sides of the triangle and the cevian as
shown in Figure \ref{fig:in-ex-corollary}.
Then
$$\frac{1}{r_1}+\frac{1}{r_4}=\frac{1}{r_2}+\frac{1}{r_3}.$$
\end{theorem}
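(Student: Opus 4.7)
The plan is to apply the Inradius/Exradius Invariant just proved, twice, once to each of the two subtriangles $\triangle ABD$ and $\triangle ACD$ that the cevian $AD$ cuts $\triangle ABC$ into. The key observation driving the proof is that both subtriangles share the vertex $A$ and both have their third side lying on the \emph{same} line $BC$, so the invariant quantity $\frac{2}{h}$ produced by the theorem is the same in both applications.

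First I would identify the four circles from the figure. Based on the index pattern and the symmetric form of the target identity, the two ``inner'' circles with radii $r_2$ and $r_3$ are naturally the incircles of $\triangle ABD$ and $\triangle ACD$ respectively, while the two ``outer'' circles with radii $r_1$ and $r_4$ are the excircles of those subtriangles that are opposite the common vertex $A$ (equivalently, $r_1$ touches side $BD$ of $\triangle ABD$ and $r_4$ touches side $DC$ of $\triangle ACD$).

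Then I apply the Inradius/Exradius Invariant to $\triangle ABD$ with fixed point $A$ and fixed line $L=BC$, obtaining
\[
\frac{1}{r_2}-\frac{1}{r_1}=\frac{2}{h},
\]
where $h$ denotes the distance from $A$ to $BC$. Applying the same theorem to $\triangle ACD$, with the \emph{same} fixed point $A$ and the \emph{same} fixed line $BC$, yields
\[
\frac{1}{r_3}-\frac{1}{r_4}=\frac{2}{h}
\]
with the identical right-hand side. Equating the two expressions and rearranging gives $\frac{1}{r_1}+\frac{1}{r_4}=\frac{1}{r_2}+\frac{1}{r_3}$, as required.

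There is no real obstacle here: the statement is an immediate corollary of the previous theorem, and the only point that requires a moment of care is confirming from the figure that $r_1,r_4$ are the two excircles opposite $A$ and $r_2,r_3$ are the two incircles (the reverse pairing would also be consistent with the invariant, but would not produce the stated identity). Once the correspondence is fixed, the common value $\frac{2}{h}$ forces the equality without any further computation.
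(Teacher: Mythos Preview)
Your approach is exactly the intended one: the paper itself offers no detailed argument and simply states that the result ``follows immediately'' from the Inradius/Exradius Invariant, which is precisely the two applications you describe.

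There is, however, a bookkeeping slip. With your stated identification ($r_2$ and $r_3$ the incircles, $r_1$ and $r_4$ the excircles, and the pair $r_1,r_2$ belonging to $\triangle ABD$ while $r_3,r_4$ belong to $\triangle ACD$), your two displayed equations are correct, but equating them gives
\[
\frac{1}{r_2}-\frac{1}{r_1}=\frac{1}{r_3}-\frac{1}{r_4}
\quad\Longrightarrow\quad
\frac{1}{r_1}+\frac{1}{r_3}=\frac{1}{r_2}+\frac{1}{r_4},
\]
which is \emph{not} the stated identity $\tfrac{1}{r_1}+\tfrac{1}{r_4}=\tfrac{1}{r_2}+\tfrac{1}{r_3}$. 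The labeling consistent with the theorem (and the figure) pairs the incircle and the excircle of the \emph{same} subtriangle on opposite sides of the equation: e.g.\ $r_1$ the incircle and $r_2$ the excircle of $\triangle ABD$, and $r_3$ the incircle and $r_4$ the excircle of $\triangle ACD$ (or the reverse in/ex roles). With that pairing your two invariants read $\tfrac{1}{r_1}-\tfrac{1}{r_2}=\tfrac{2}{h}=\tfrac{1}{r_3}-\tfrac{1}{r_4}$ and the rearrangement really does give the claimed result. You anticipated exactly this hazard in your final paragraph; it just needs to be carried through consistently.
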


\begin{figure}[h!t]
\centering
\includegraphics[width=0.5\linewidth]{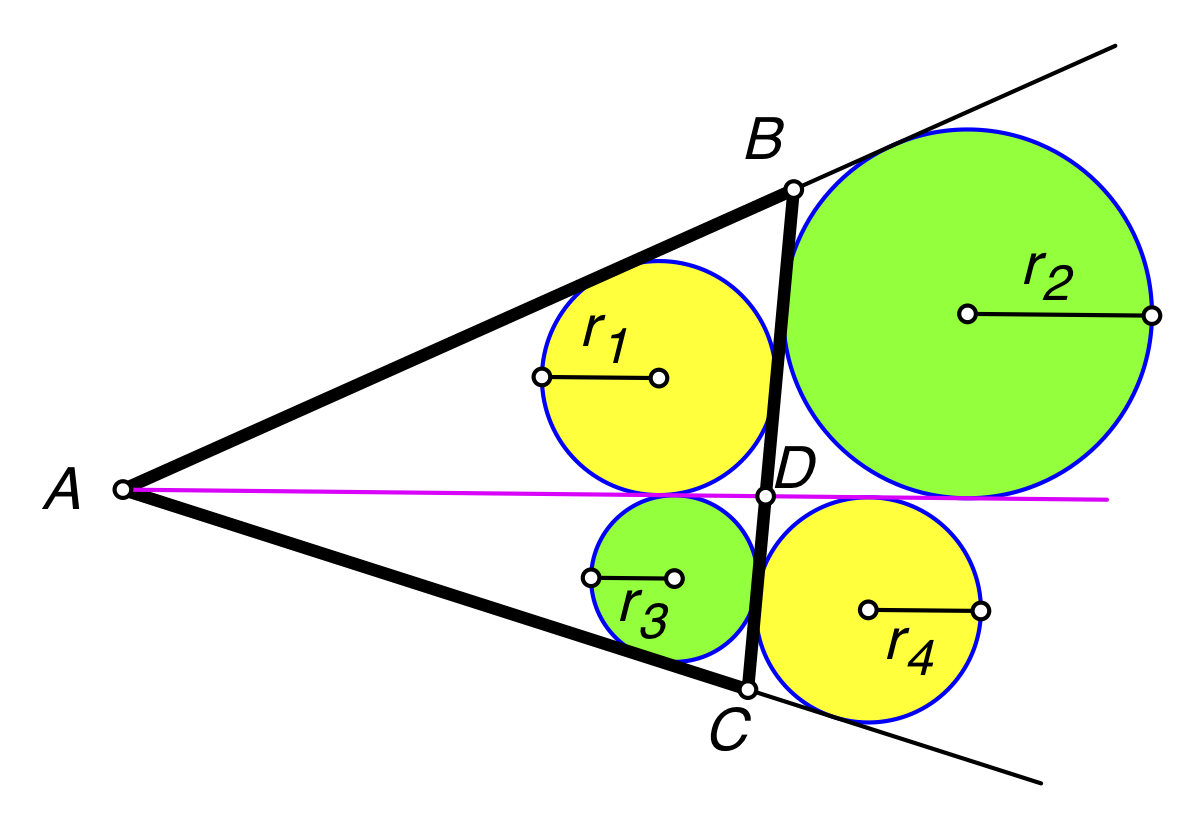}
\caption{relationship between two incircles and two excircles}
\label{fig:in-ex-corollary}
\end{figure}

There is another nice relationship between inradii and exradii.

We start with a lemma.

\begin{lemma}[Inradius/Exradius Fixed Angle Invariant]
\label{lemma:angle-invariant}
Let $A$ be a fixed point and let $L_1$ and $L_2$ be distinct fixed rays starting at $A$.
Let $B$ and $C$ be variable points on $L_1$ and $L_2$, respectively, neither
coinciding with $A$.
Let $r$ be the inradius of $\triangle ABC$ and let $r_a$ be the radius of the excircle
that touches side $BC$ (Figure~\ref{fig:in-ex-angle}).
Let $K$ be the area of $\triangle ABC$.
Then $rr_a/K$ remains invariant as $B$ and $C$ vary.
\end{lemma}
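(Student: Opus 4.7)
The plan is to express $rr_a/K$ in a closed form that depends only on the angle at $A$, which is fixed since $L_1$ and $L_2$ are fixed rays.

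Let $a=BC$, $b=CA$, $c=AB$, and $s=(a+b+c)/2$. First I would recall the standard formulas $r=K/s$ and $r_a=K/(s-a)$, already cited earlier in the paper. Multiplying gives
\begin{equation*}
\frac{rr_a}{K}=\frac{K}{s(s-a)}.
\end{equation*}
Next I would substitute Heron's formula $K=\sqrt{s(s-a)(s-b)(s-c)}$ to obtain
\begin{equation*}
\frac{rr_a}{K}=\sqrt{\frac{(s-b)(s-c)}{s(s-a)}}.
\end{equation*}

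The key recognition step is that the right-hand side is exactly the half-angle formula: $\tan(A/2)=\sqrt{(s-b)(s-c)/(s(s-a))}$, where $A$ denotes $\angle BAC$. Therefore $rr_a/K=\tan(A/2)$.

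Since $B$ lies on $L_1$ and $C$ lies on $L_2$, the angle $\angle BAC$ equals the fixed angle between the rays $L_1$ and $L_2$, independent of the choice of $B$ and $C$. Hence $\tan(A/2)$ is constant, and so is $rr_a/K$. There is no real obstacle here; the only subtlety is recalling the half-angle identity, and an alternative route avoids even that by using the known formulas $r=(s-a)\tan(A/2)$ and $r_a=s\tan(A/2)$, whose product divided by $K=s(s-a)\tan(A/2)$ yields $\tan(A/2)$ directly.
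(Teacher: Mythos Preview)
Your proof is correct and reaches the same conclusion as the paper, namely $rr_a/K=\tan(\angle BAC/2)$, but the routes differ. The paper cancels $r$ against $K=rs$ to reduce the quantity to $r_a/s$, then reads this off geometrically: the tangent length from $A$ to the excircle is $AT=s$, and in the right triangle with legs $r_a$ and $AT$ along the angle bisector one has $r_a/AT=\tan(\theta/2)$. You instead keep both exradius and inradius formulas, pass through Heron's formula, and invoke the half-angle identity $\tan(A/2)=\sqrt{(s-b)(s-c)/(s(s-a))}$. Your argument is purely algebraic and self-contained once the standard formulas are granted; the paper's is shorter and more visual, needing only the tangent-length fact $AT=s$ rather than Heron and the half-angle formula. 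Your closing alternative, using $r=(s-a)\tan(A/2)$ and $r_a=s\tan(A/2)$, is in fact closest in spirit to the paper's geometric computation.
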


\begin{figure}[h!t]
\centering
\includegraphics[width=0.5\linewidth]{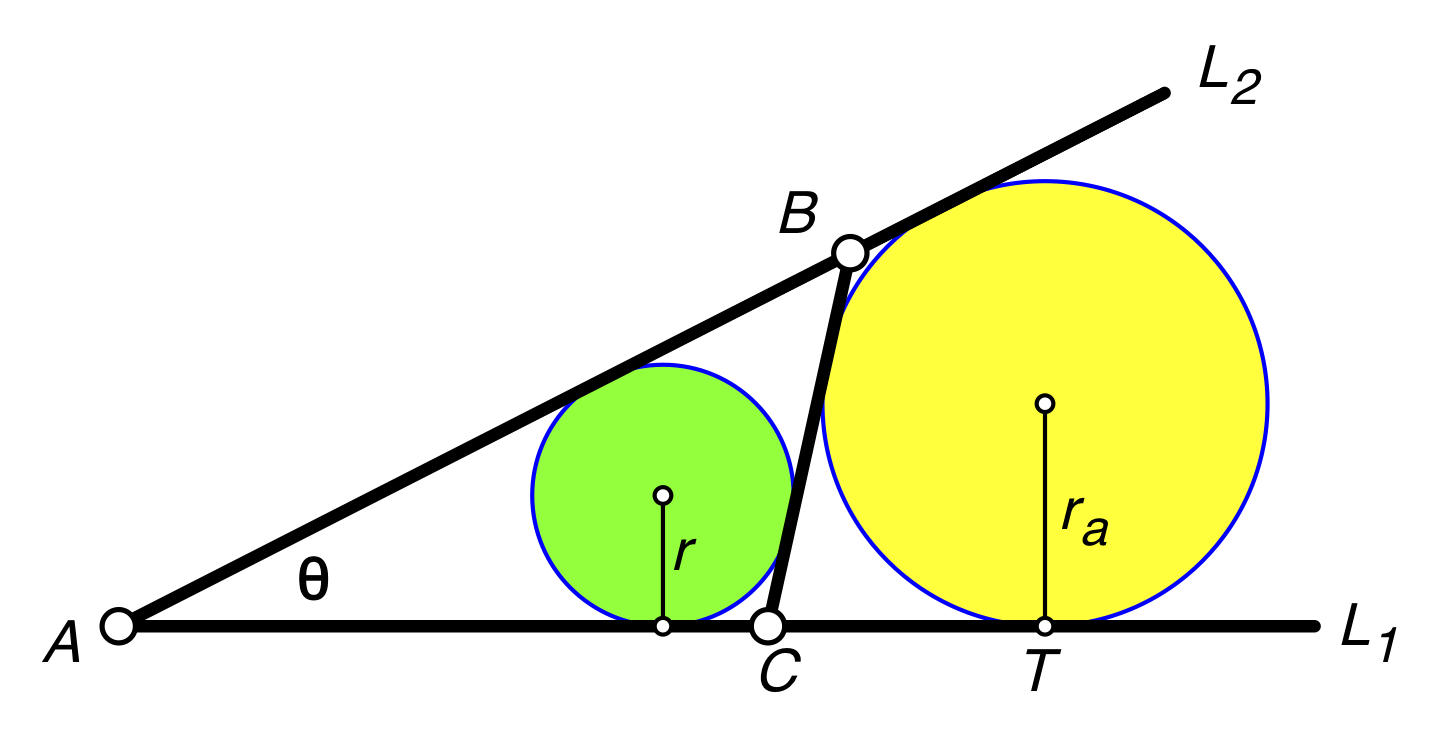}
\caption{inradius and exradius}
\label{fig:in-ex-angle}
\end{figure}

\begin{proof}
Let $T$ be the contact point of the excircle with $L_1$.
It is known that $AT=s$, where $s$ is the semiperimeter of $\triangle ABC$
\cite[Theorem~158]{Altshiller-Court}.
Let $\angle BAC=\theta$. Note that the bisector of $\angle BAC$
passes through the centers of the two circles.
Since $K=rs$, we have
$$\frac{rr_a}{K}=\frac{rr_a}{rs}=\frac{r_a}{s}=\frac{r_a}{AT}=\tan\frac{\theta}{2}.$$
Thus, $rr_a/K$ is invariant because the angle $\theta$ is fixed.
\end{proof}

\begin{theorem}[Relationship Between Six Incircles and Six Excircles]
\label{thm:in-ex6}
Let $P$ be a point inside $\triangle ABC$.
The cevians through $P$ divide $\triangle ABC$ into six small triangles,
named $T_1$ through $T_6$ as shown in Figure~\ref{fig:sixTriangles}.
Let $r_i$ be the inradius of $T_i$.
Let $R_i$ be the exradius of $T_i$ that touches a side of $\triangle ABC$.
Then
$$r_1r_3r_5R_1R_3R_5=r_2r_4r_6R_2R_4R_6.$$
\end{theorem}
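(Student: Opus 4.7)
The plan is to apply Lemma~\ref{lemma:angle-invariant} separately to each of the six small triangles $T_i$, with $P$ playing the role of the ``fixed vertex'' $A$ in the lemma. In each $T_i$, the side on which the excircle of radius $R_i$ lies is a portion of a side of $\triangle ABC$, and the vertex opposite that side is $P$; the two other sides of $T_i$ are segments of cevians emanating from $P$. Writing $\theta_i$ for the angle of $T_i$ at $P$ and $K_i$ for its area, the formula extracted from the proof of Lemma~\ref{lemma:angle-invariant} gives
$$r_i R_i \;=\; K_i \tan(\theta_i/2), \qquad i=1,\dots,6.$$
Multiplying these six identities, grouped as $\{1,3,5\}$ and $\{2,4,6\}$, reduces the theorem to two subsidiary equalities: one for areas and one for tangents.

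The tangent identity is immediate. Since $AD$, $BE$, $CF$ are three concurrent straight lines through $P$, they cut the six angles at $P$ into three pairs of vertical angles. Under the cyclic labeling of Figure~\ref{fig:sixTriangles}, diametrically opposite triangles share a pair of vertical angles at $P$, so $\theta_1=\theta_4$, $\theta_2=\theta_5$, and $\theta_3=\theta_6$; hence
$$\tan(\theta_1/2)\tan(\theta_3/2)\tan(\theta_5/2)=\tan(\theta_2/2)\tan(\theta_4/2)\tan(\theta_6/2).$$

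The area identity $K_1K_3K_5=K_2K_4K_6$ follows from Ceva's theorem. Each adjacent pair $T_i,T_{i+1}$ around $P$ shares the altitude from $P$ to the same side of $\triangle ABC$ and has bases equal to the two subsegments cut by a cevian foot on that side, so the three ratios $K_1/K_2$, $K_3/K_4$, $K_5/K_6$ coincide (in some order) with the Ceva ratios $AF/FB$, $BD/DC$, $CE/EA$, whose product is $1$ because the three cevians are concurrent at $P$. Taking products of $r_iR_i=K_i\tan(\theta_i/2)$ over odd and even $i$ and applying these two equalities yields the theorem. The main point of care is verifying from Figure~\ref{fig:sixTriangles} that the cyclic labeling does pair odd with even indices both through the vertical-angle correspondence at $P$ and through Ceva; once that is read off the figure, the proof is simply the product of Lemma~\ref{lemma:angle-invariant}, Ceva's theorem, and the vertical-angle observation, with no further computation required.
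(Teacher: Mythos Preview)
Your proof is correct and follows essentially the same route as the paper: both apply Lemma~\ref{lemma:angle-invariant} at the vertex $P$ in each of the six triangles, use the vertical-angle pairing $\theta_1=\theta_4$, $\theta_2=\theta_5$, $\theta_3=\theta_6$, and finish with the area identity $K_1K_3K_5=K_2K_4K_6$. The only difference is that you supply a self-contained proof of the area identity via Ceva's theorem, whereas the paper cites it as Theorem~7.4 of \cite{Rabinowitz}.
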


\begin{figure}[h!t]
\centering
\includegraphics[width=0.5\linewidth]{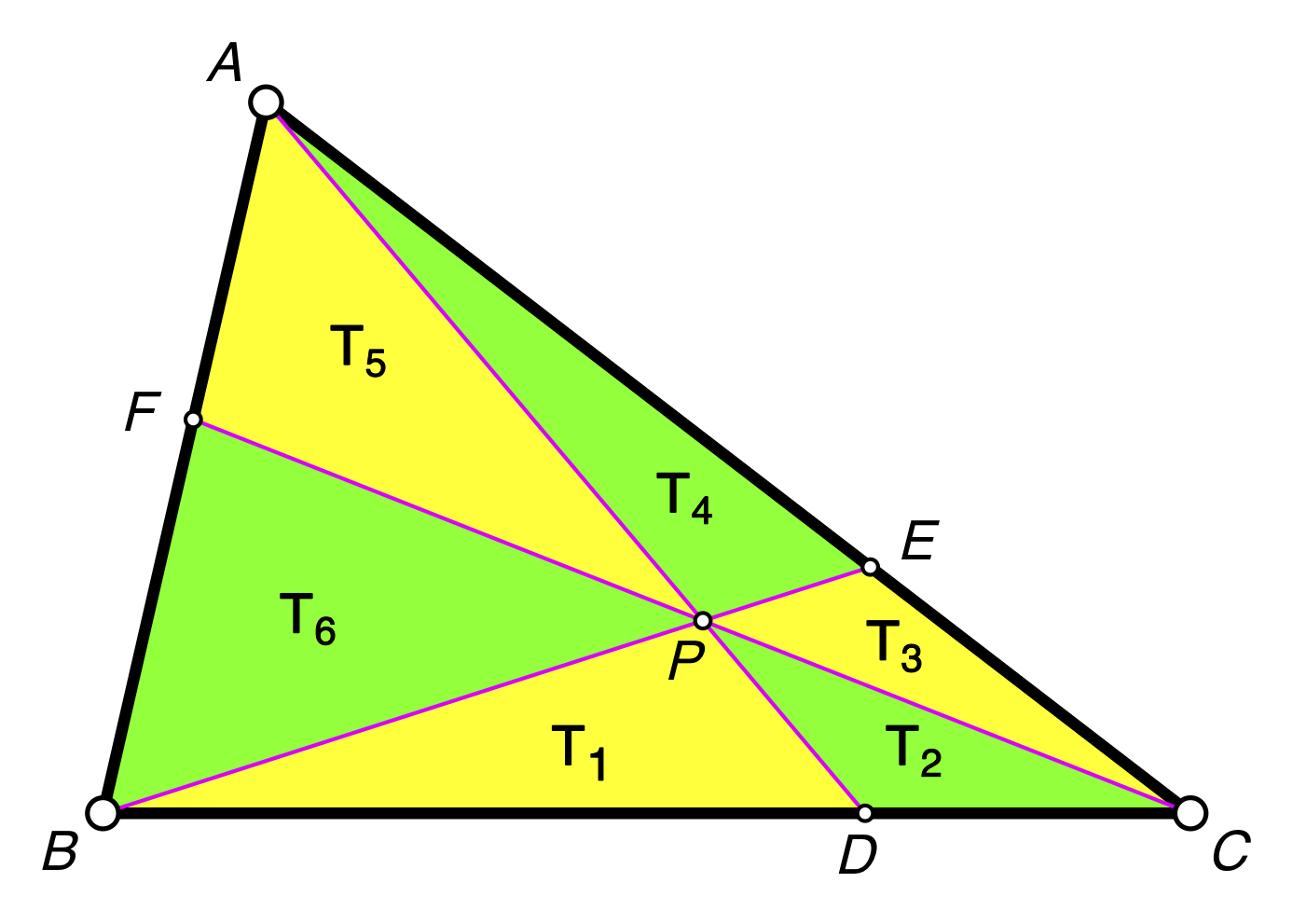}
\caption{six triangles}
\label{fig:sixTriangles}
\end{figure}

\begin{proof}
Note that in Figure~\ref{fig:sixTriangles}, $AD$ and $BE$ are straight lines passing through $P$, so $\angle BPD=\angle APE$.
By Lemma \ref{lemma:angle-invariant}, $r_1R_1/K_1=r_4R_4/K_4$, with similar identities
for the other two pairs of triangles. Therefore,
$$\frac{r_1R_1}{K_1}\cdot \frac{r_3R_3}{K_3}\cdot \frac{r_5R_5}{K_5}=
\frac{r_4R_4}{K_4}\cdot \frac{r_3R_6}{K_6}\cdot \frac{r_2R_2}{K_2}.$$
But $K_1K_3K_5=K_2K_4K_6$ by Theorem 7.4 from \cite{Rabinowitz}. Thus,
$r_1r_3r_5R_1R_3R_5=r_2r_4r_6R_2R_4R_6.$
\end{proof}

\textbf{Alternate formulation of Theorem \ref{thm:in-ex6}:}
\textit{In Figure \ref{fig:twelveCircles},
the product of the radii of the yellow circles is equal to the product of the radii of the green circles.}

\begin{figure}[h!b]
\centering
\includegraphics[width=0.5\linewidth]{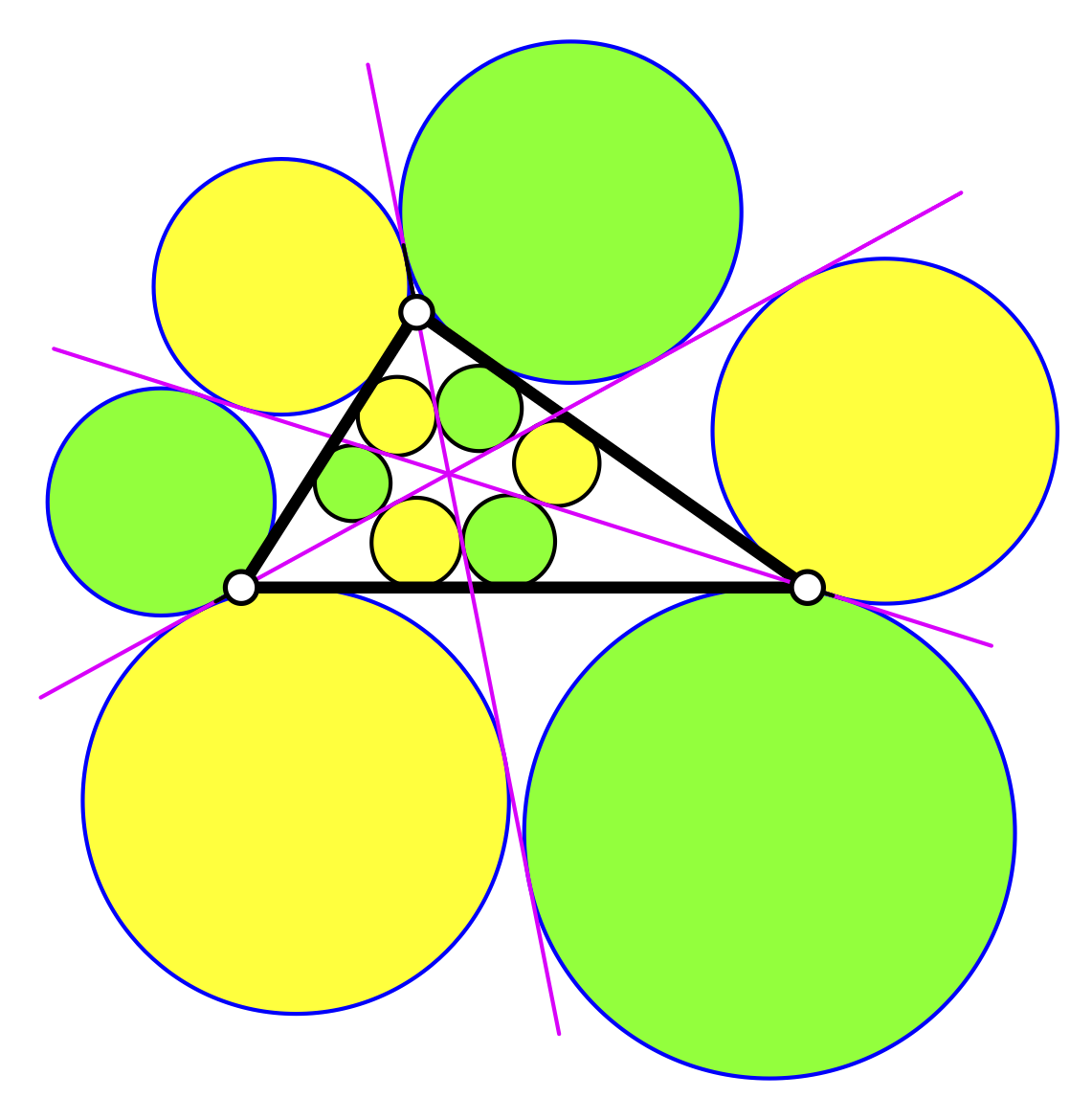}
\caption{twelve circles}
\label{fig:twelveCircles}
\end{figure}

Theorem \ref{thm:in-ex6} provides an alternate proof to some earlier theorems.
Applying Theorem \ref{thm:in-ex6} to Theorem \ref{thm:Orthocenter1} yields Theorem \ref{thm:Orthocenter2}.
Applying Theorem \ref{thm:in-ex6} to Theorem \ref{thm:Nagel1} yields Theorem \ref{thm:Nagel2}.

\goodbreak
We also have the following companion result, coloring the circles differently.

\textbf{Consequence of Theorem \ref{thm:in-ex}:}
\textit{In Figure \ref{fig:twelveCircles2}, the sum of the reciprocals of the radii of the yellow circles is equal to the sum of the reciprocals of the radii of the green circles.}

\begin{figure}[h!t]
\centering
\includegraphics[width=0.5\linewidth]{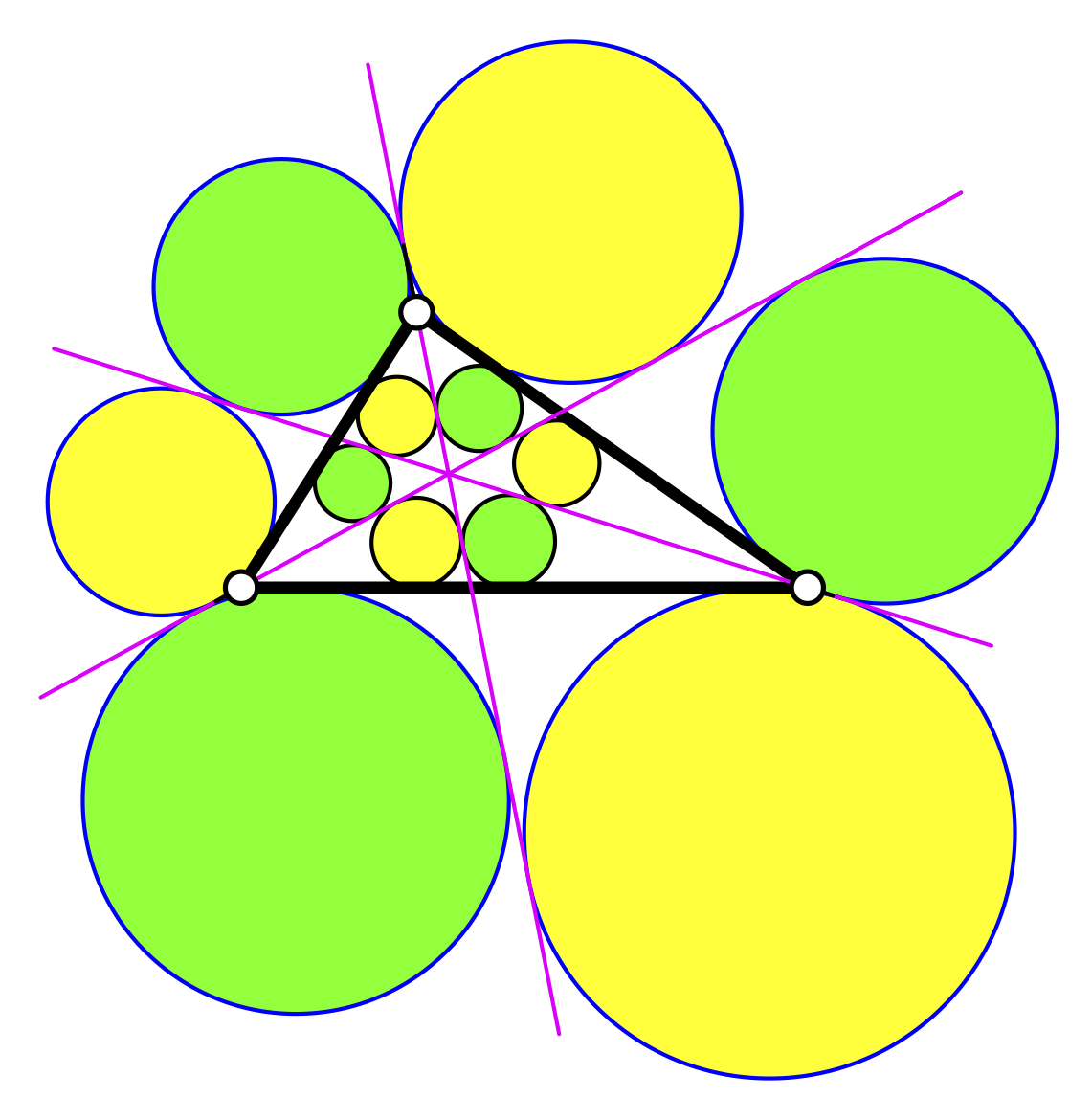}
\caption{twelve circles}
\label{fig:twelveCircles2}
\end{figure}

\section{The Circumcenter}

The following theorem involving incircles was proven in \cite{Rabinowitz}.

\begin{theorem}
\label{thm:in-circumcenter}
Let $O$ be the circumcenter of $\triangle ABC$. The cevians through $O$ divide $\triangle ABC$
into six small triangles and circles are inscribed in these triangles as shown in Figure \ref{fig:in-circumcenter}.
The circle labeled $i$ in the figure has radius $r_i$.
Then
$$\frac{1}{r_1}+\frac{1}{r_3}+\frac{1}{r_5}=\frac{1}{r_2}+\frac{1}{r_4}+\frac{1}{r_6}.$$
\end{theorem}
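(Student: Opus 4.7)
My plan is to prove the identity by explicit trigonometric computation, exploiting the fact that all six small triangles share the common vertex $O$ and that $OA = OB = OC = R$. Label the small triangles cyclically as $T_1 = BOD$, $T_2 = DOC$, $T_3 = COE$, $T_4 = EOA$, $T_5 = AOF$, $T_6 = FOB$, and write $1/r_i = s_i/K_i$. The perpendicular distance from $O$ to side $BC$ is $R\cos A$ (and cyclic analogues), so $K_1 = \tfrac{1}{2}\, BD \cdot R\cos A$ and similarly for the other five areas.

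Next I would compute $1/r_1$ explicitly. The isosceles triangle $OAB$ has $\angle OAB = \angle OBA = 90^\circ - C$, so in $\triangle ABD$ the angle at $A$ is $90^\circ-C$ and the law of sines yields $BD = 2R\sin C\cos C/\cos(B-C)$ and $AD = 2R\sin B\sin C/\cos(B-C)$. Using $OA = R$ together with the identity $2\sin B\sin C = \cos(B-C) + \cos A$, one obtains $OD = R\cos A/\cos(B-C)$. After assembling $s_1$ and dividing by $K_1$, the $\cos(B-C)$ factor cancels cleanly, yielding
$$\frac{1}{r_1} = \frac{\cos C + \sin B}{R\cos A\cos C}.$$
The cyclic shift $(A,B,C) \to (B,C,A)$ acts on the labels as $T_1 \to T_3 \to T_5$ and $T_2 \to T_4 \to T_6$, so the analogous formulas for $1/r_2,\ldots,1/r_6$ follow by cycling the roles of the angles.

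Finally, I would verify the identity by a telescoping argument. Pairing the triangles at each vertex of $\triangle ABC$ (namely $T_1, T_6$ at $B$; $T_3, T_2$ at $C$; $T_5, T_4$ at $A$) and subtracting the cycled formulas gives
\begin{align*}
\frac{1}{r_1}-\frac{1}{r_6} &= \frac{1}{R\cos A}-\frac{1}{R\cos C},\\
\frac{1}{r_3}-\frac{1}{r_2} &= \frac{1}{R\cos B}-\frac{1}{R\cos A},\\
\frac{1}{r_5}-\frac{1}{r_4} &= \frac{1}{R\cos C}-\frac{1}{R\cos B},
\end{align*}
whose right-hand sides sum to zero, delivering the claim. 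I expect the main obstacle to be the trigonometric simplification producing the clean form for $1/r_1$; in particular, one must notice that the apparently ungainly factor $\cos(B-C)$ cancels out of $s_1/K_1$, a cancellation that rests on $2\sin B\sin C = \cos(B-C) + \cos A$. Once that clean formula is in hand, the telescoping step is essentially automatic.
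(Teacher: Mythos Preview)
The paper does not actually prove this theorem; it is quoted verbatim from the companion paper \cite{Rabinowitz} on incircles, so there is no proof here to compare against directly.

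Your trigonometric computation is correct. The formula
\[
\frac{1}{r_1}=\frac{\cos C+\sin B}{R\cos A\cos C}
\]
checks out: with $BD=2R\sin C\cos C/\cos(B-C)$, $OD=R\cos A/\cos(B-C)$, $OB=R$, and $K_1=\tfrac12\,BD\cdot R\cos A$, the numerator of $s_1/K_1$ becomes $\cos(B-C)+\sin 2C+\cos A=2\sin C(\sin B+\cos C)$ via the identity $2\sin B\sin C=\cos(B-C)+\cos A$, and the $\cos(B-C)$ and $\sin C$ factors cancel exactly as you claim. The cyclic shift $(A,B,C)\to(B,C,A)$ does carry $T_1\to T_3\to T_5$ and $T_2\to T_4\to T_6$, and the three differences $1/r_1-1/r_6$, $1/r_3-1/r_2$, $1/r_5-1/r_4$ telescope to zero as stated.

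Judging from how the present paper treats analogous incenter results (Theorems~\ref{thm:incenter120} and~\ref{thm:ex-incenter-relation} are handled by ``computer simplification of trigonometric expressions''), the original proof in \cite{Rabinowitz} likely relied on machine algebra as well. Your argument is more transparent: it produces a closed form for each $1/r_i$ and finishes by a visible telescoping, with no black-box simplification. One small caveat worth stating explicitly is that the argument assumes $\triangle ABC$ is acute, so that $O$ is interior and the distances $R\cos A$, $R\cos B$, $R\cos C$ are positive; this is of course implicit in the hypothesis that the cevians through $O$ cut the triangle into six pieces.
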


\begin{figure}[h!t]
\centering
\includegraphics[width=0.38\linewidth]{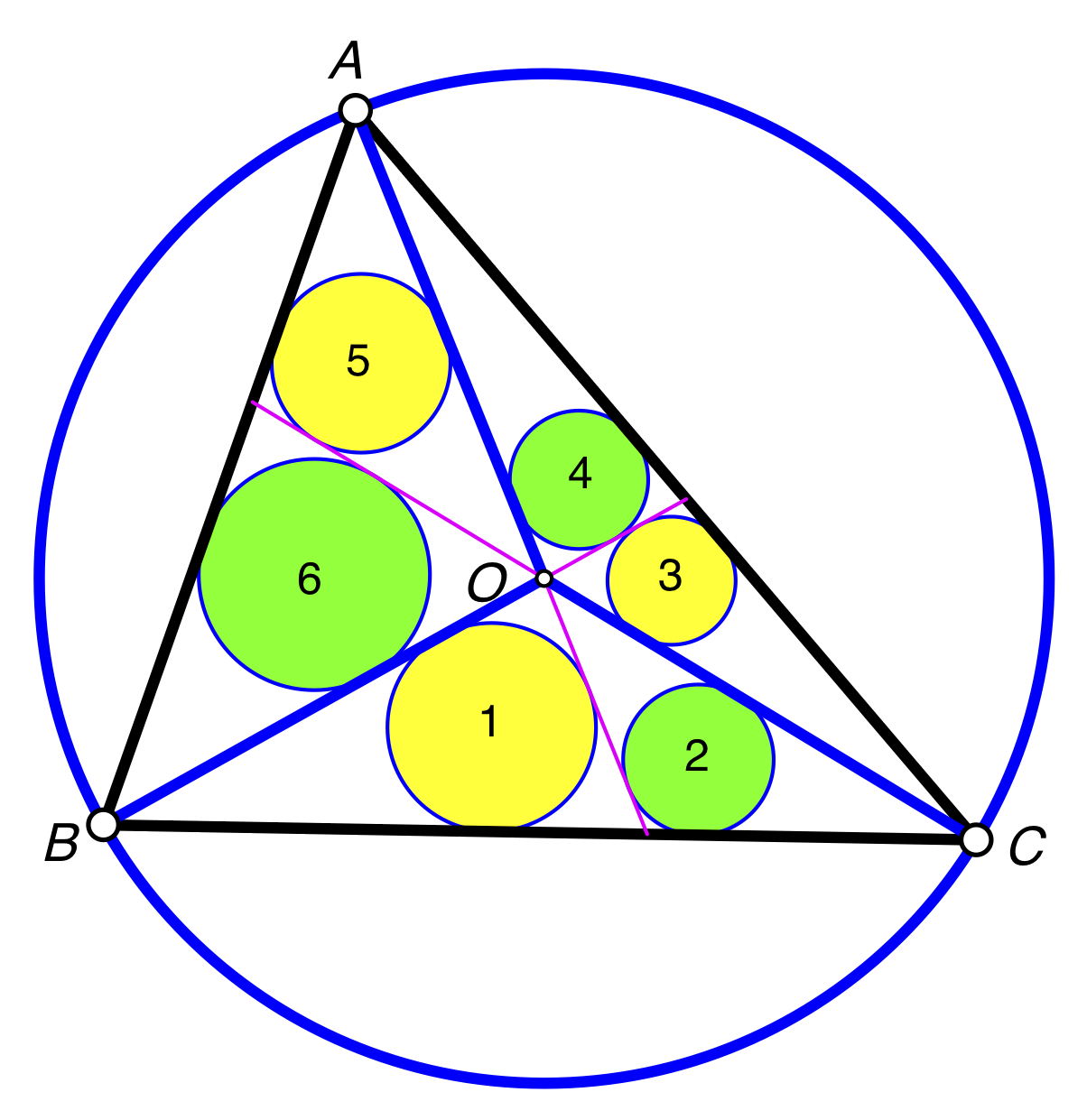}
\caption{circumcenter with incircles}
\label{fig:in-circumcenter}
\end{figure}

\goodbreak
Applying Theorem \ref{thm:in-ex} yields the following result about excircles.

\begin{theorem}
\label{thm:circumcenter}
Let $O$ be the circumcenter of $\triangle ABC$ and let the cevians through $O$ be $AD$, $BE$, and $CF$.
Let $r_1$ through $r_6$ be the radii of six circles tangent to the sides of $\triangle ABC$ and the cevians through $O$ situated as shown in Figure~\ref{fig:circumcenter}.
Then
$$\frac{1}{r_1}+\frac{1}{r_3}+\frac{1}{r_5}=\frac{1}{r_2}+\frac{1}{r_4}+\frac{1}{r_6}.$$
\end{theorem}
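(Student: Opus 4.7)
The plan is to deduce Theorem~\ref{thm:circumcenter} from the incircle version (Theorem~\ref{thm:in-circumcenter}) by bootstrapping through Theorem~\ref{thm:in-ex}. The key observation is that each cevian segment $OD$, $OE$, $OF$ is itself a cevian of one of the three larger triangles $\triangle BOC$, $\triangle COA$, $\triangle AOB$; each such cevian cuts its host triangle into exactly two of the six small triangles, and those two small triangles border a common side of $\triangle ABC$.

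First, I would apply Theorem~\ref{thm:in-ex} to $\triangle BOC$ with cevian $OD$. The four circles appearing in that theorem are then the incircles of the small triangles $\triangle BOD$ and $\triangle DOC$ together with the excircles of those same small triangles that touch side $BC$; the latter two are precisely two of the six excircles in the figure for Theorem~\ref{thm:circumcenter}. Writing $\rho_i$ for the inradius of the $i$th small triangle and $r_i$ for the excircle in the statement of Theorem~\ref{thm:circumcenter}, the relation furnished by Theorem~\ref{thm:in-ex} can be rearranged to
$$\frac{1}{\rho_1}-\frac{1}{r_1}=\frac{1}{\rho_2}-\frac{1}{r_2}.$$
I would then repeat this for the cevian $OE$ in $\triangle COA$ and for the cevian $OF$ in $\triangle AOB$, producing the analogous identities for the index pairs $\{3,4\}$ and $\{5,6\}$.

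Next, I would add the three identities with signs matched to the odd/even partition. The inradius side becomes
$$\left(\frac{1}{\rho_1}+\frac{1}{\rho_3}+\frac{1}{\rho_5}\right)-\left(\frac{1}{\rho_2}+\frac{1}{\rho_4}+\frac{1}{\rho_6}\right),$$
which vanishes by Theorem~\ref{thm:in-circumcenter}. Hence the exradius side must vanish as well, and this is exactly the identity to be proved.

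The main obstacle is purely bookkeeping: one must confirm that the odd/even labeling in the figure of Theorem~\ref{thm:circumcenter} agrees with the one in Theorem~\ref{thm:in-circumcenter}, so that each of the three cevian pairs $\{1,2\}$, $\{3,4\}$, $\{5,6\}$ consists of one odd and one even index. Since the six small triangles alternate in parity as one walks around $O$, this is automatic, and beyond the two invocations of previous results there is no further geometric content.
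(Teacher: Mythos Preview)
Your proposal is correct and is precisely the argument the paper has in mind: the paper's entire proof is the single sentence ``Applying Theorem~\ref{thm:in-ex} yields the following result about excircles,'' and you have simply written out in full how the three applications of Theorem~\ref{thm:in-ex} (to $OD$ in $\triangle BOC$, $OE$ in $\triangle COA$, $OF$ in $\triangle AOB$) combine with Theorem~\ref{thm:in-circumcenter} to give the result.
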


\begin{figure}[h!t]
\centering
\includegraphics[width=0.38\linewidth]{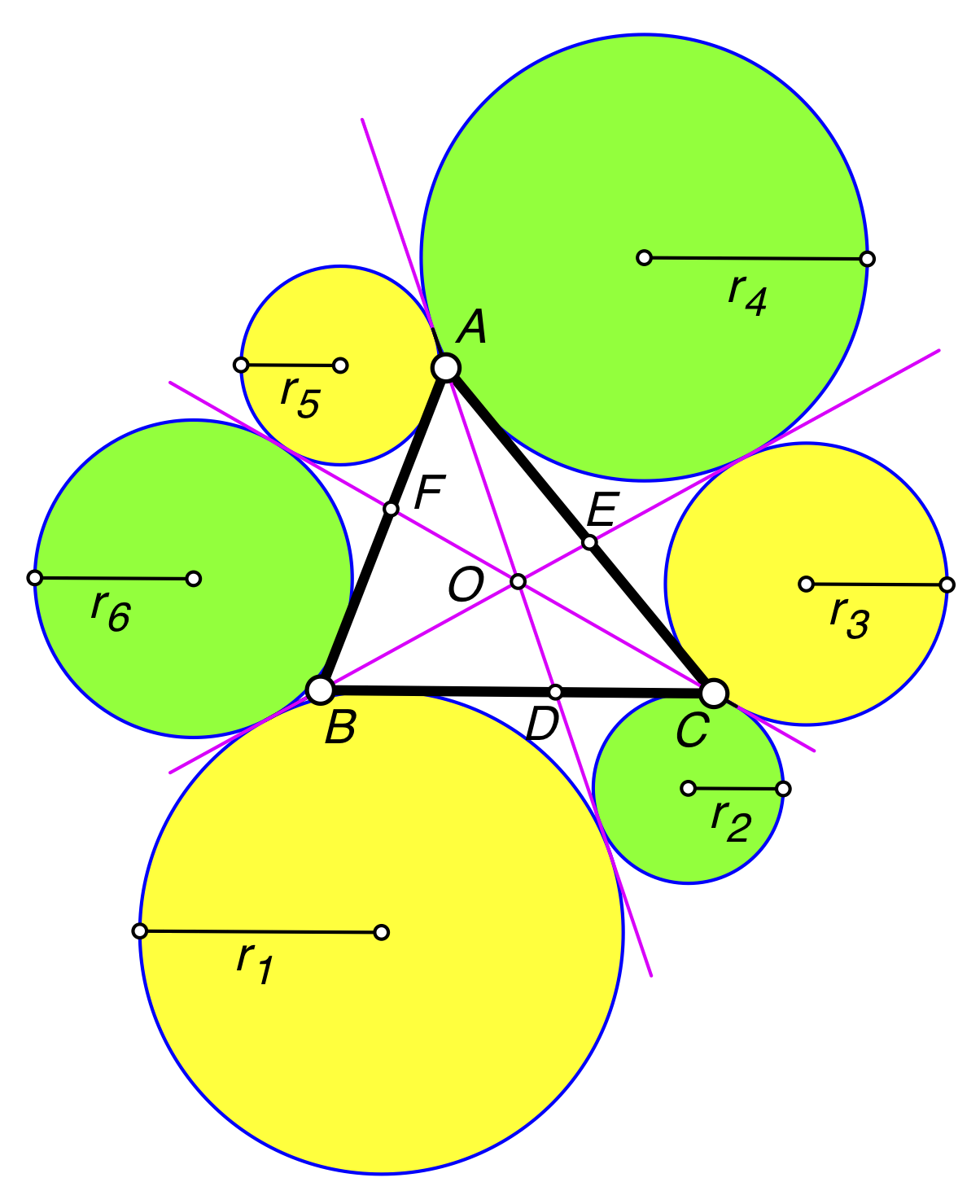}
\caption{circumcenter with excircles}
\label{fig:circumcenter}
\end{figure}

\bigskip
\section{The Incenter}

The following theorem involving incircles was proven in \cite{Rabinowitz}.

\begin{theorem}
\label{thm:in-incenter60}
Let $I$ be the incenter of $\triangle ABC$ and suppose $\angle ABC=60\degrees$. The cevians through $I$ divide $\triangle ABC$
into six small triangles and circles are inscribed in these triangles as shown in Figure \ref{fig:in-incenter60}.
The circle labeled $i$ in the figure has radius $r_i$.
Then
$$\frac{1}{r_1}+\frac{1}{r_4}+\frac{1}{r_5}=\frac{1}{r_2}+\frac{1}{r_3}+\frac{1}{r_6}.$$
\end{theorem}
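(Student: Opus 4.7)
The plan is to derive a compact closed-form formula for the inradius of each of the six small triangles $T_i$ and then verify the identity by direct trigonometric manipulation, with the $\angle B=60\degrees$ hypothesis entering at a single clean step.

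Set $\alpha=\angle A/2$, $\beta=\angle B/2=30\degrees$, $\gamma=\angle C/2$, so $\alpha+\beta+\gamma=90\degrees$ and $\alpha+\gamma=60\degrees$. Each $T_i$ has an outer vertex on $\triangle ABC$ with half-angle $\theta\in\{\alpha,\beta,\gamma\}$ (since $I$ lies on the corresponding angle bisector), the point $I$ with angle $\psi$ (which the identities $\angle BIC=90\degrees+\alpha$, $\angle CIA=90\degrees+\beta$, $\angle AIB=90\degrees+\gamma$ show is always $90\degrees$ minus one of $\alpha,\beta,\gamma$), and a cevian foot on the opposite side of $\triangle ABC$. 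The side of $T_i$ from the outer vertex to $I$ has length $r/\sin\theta$, where $r$ is the inradius of $\triangle ABC$. Combining the law of sines in $T_i$ with $r_T=4R_T\sin(\theta/2)\sin(\phi/2)\sin(\psi/2)$ (where $\phi=180\degrees-\theta-\psi$) collapses to the clean formula
$$\frac{1}{r_T}=\frac{\sin\theta}{r\tan(\psi/2)}+\frac{1+\cos\theta}{r}.$$

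Next, read off $(\theta_i,\psi_i)$ for the six triangles and form
$$D=\Bigl(\frac{1}{r_1}+\frac{1}{r_4}+\frac{1}{r_5}\Bigr)-\Bigl(\frac{1}{r_2}+\frac{1}{r_3}+\frac{1}{r_6}\Bigr).$$
The \emph{constant} pieces $\sum(1+\cos\theta_i)/r$ give a discrepancy of the form $\pm 2(\cos\alpha-\cos\gamma)/r$, which by sum-to-product and $\alpha+\gamma=60\degrees$ equals $\mp 2\sin(\alpha-30\degrees)/r$. The \emph{tangent} pieces $\sum\sin\theta_i/(r\tan(\psi_i/2))$ should cancel this exactly: after substituting $\gamma=60\degrees-\alpha$, numerators like $\sin\beta+\sin\alpha$ factor via sum-to-product and partially cancel against the $\tan$-denominators, leaving a common factor of $2\cos(15\degrees-\alpha/2)$ multiplied by a cosine difference that collapses (one more sum-to-product) to $\sin(30\degrees-\alpha)$.

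The main obstacle is this trigonometric bookkeeping: nothing is conceptually hard, but careful organization is needed to see why $60\degrees$ is precisely the hypothesis that forces the two discrepancies to match. A safe fallback, mirroring Lemma~\ref{lemma:2}, is the brute-force route: express every side of every $T_i$ in closed form in $r,\alpha,\gamma$ and have a computer algebra system verify $D=0$.
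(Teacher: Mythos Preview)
Your plan is sound and the key formula
\[
\frac{1}{r_T}=\frac{\sin\theta}{r\tan(\psi/2)}+\frac{1+\cos\theta}{r}
\]
is correct (it follows exactly as you indicate from $r_T=4R_T\sin(\theta/2)\sin(\phi/2)\sin(\psi/2)$ and the law of sines with the side $IV=r/\sin\theta$ opposite the foot). Carrying the computation through: the constant contributions give $2(\cos\alpha-\cos\gamma)/r=-2\sin(\alpha-30\degrees)/r$; in the tangent contributions the sums $\sin\beta+\sin\alpha$ and $\sin\beta+\sin\gamma$ do produce the common factor $2\cos(15\degrees-\alpha/2)$ after cancelling against the $\tan(45\degrees-\cdot)$ denominators, and one more sum-to-product yields $\sin(30\degrees-\alpha)$, while the remaining term $(\sin\alpha-\sin\gamma)/\tan(45\degrees-\beta/2)=3\sin(\alpha-30\degrees)$ brings the total tangent discrepancy to $2\sin(\alpha-30\degrees)/r$. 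The two pieces cancel and $D=0$.

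This is genuinely different from, and sharper than, what the paper does. The paper does not prove Theorem~\ref{thm:in-incenter60} here at all; it imports it from \cite{Rabinowitz}, and explicitly remarks that the only known proof ``involves computer simplification of complicated trigonometric expressions,'' posing as an open question whether a proof of comparable simplicity to the others in the paper exists. Your closed form for $1/r_T$ reduces everything to a short string of sum-to-product identities that can be checked by hand, so if you write it out in full (rather than leaving the last step as a sketch) you are not merely reproving the theorem but essentially answering the paper's Open Question~2 for the $60\degrees$ case. Your CAS fallback is precisely the paper's route, so there is no reason to retreat to it.
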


\begin{figure}[h!t]
\centering
\includegraphics[width=0.6\linewidth]{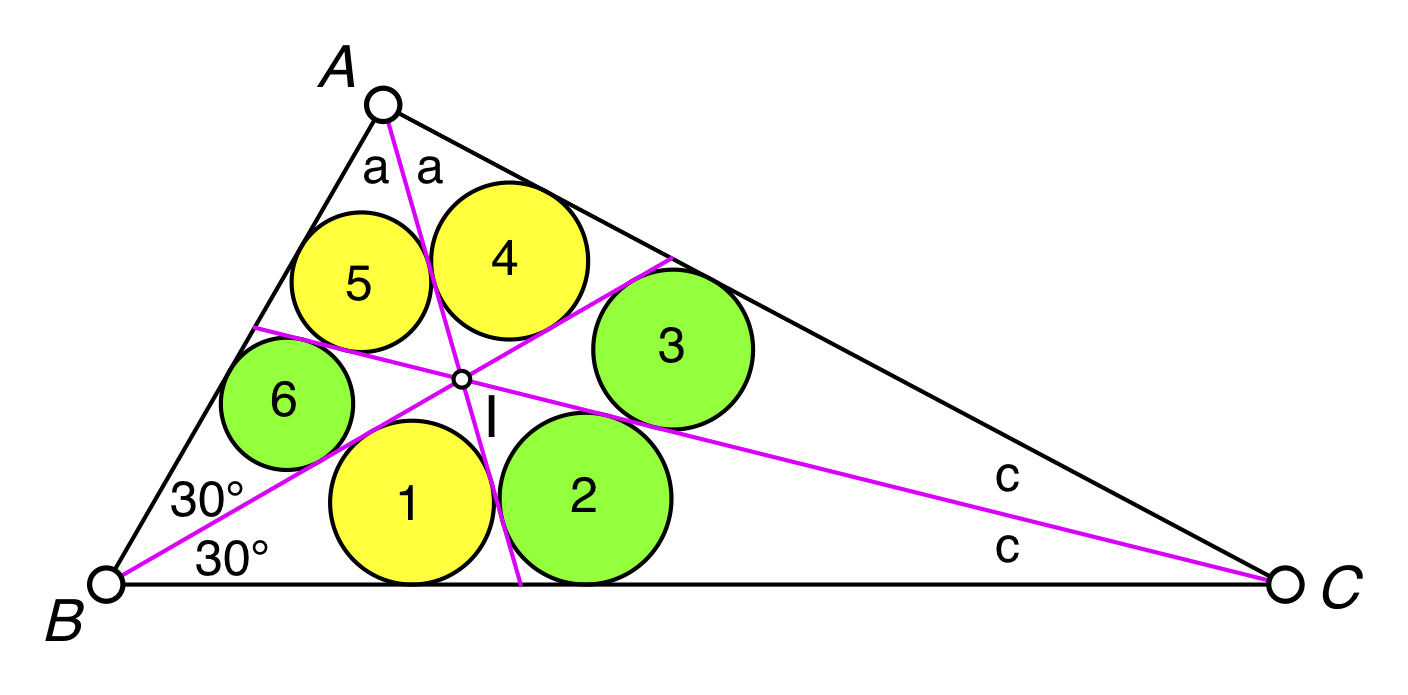}
\caption{incenter with incircles}
\label{fig:in-incenter60}
\end{figure}

\goodbreak
Applying Theorem \ref{thm:in-ex} yields the following result about excircles.

\begin{theorem}
\label{thm:incenter60}
Let $I$ be the incenter of $\triangle ABC$ and suppose $\angle ABC=60\degrees$.
Let $r_1$ through $r_6$ be the radii of six circles tangent to the sides of $\triangle ABC$ and the cevians through $I$ situated as shown in Figure~\ref{fig:incenter60}.
Then
$$\frac{1}{r_1}+\frac{1}{r_4}+\frac{1}{r_5}=\frac{1}{r_2}+\frac{1}{r_3}+\frac{1}{r_6}.$$
\end{theorem}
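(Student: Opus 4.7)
The plan is to deduce this excircle identity from the incircle identity of Theorem~\ref{thm:in-incenter60} by applying Theorem~\ref{thm:in-ex} once per cevian through $I$. Write $\rho_i$ for the inradius of the $i$th small triangle of Figure~\ref{fig:incenter60} (using the same labeling as in Figure~\ref{fig:in-incenter60}), so that Theorem~\ref{thm:in-incenter60} reads
\[
\frac{1}{\rho_1}+\frac{1}{\rho_4}+\frac{1}{\rho_5}=\frac{1}{\rho_2}+\frac{1}{\rho_3}+\frac{1}{\rho_6}.
\]

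Let $D$ be the foot of cevian $AD$ on $BC$. Then $ID$ is a cevian of the sub-triangle $\triangle BIC$ of $\triangle ABC$, and it separates exactly two of the six small triangles, call them $T_L$ and $T_R$. Applying Theorem~\ref{thm:in-ex} to $\triangle BIC$ with cevian $ID$ produces an equation
\[
\frac{1}{\rho_L}-\frac{1}{r_L}=\frac{1}{\rho_R}-\frac{1}{r_R},
\]
which is just a rearrangement of the $1/r_1+1/r_4=1/r_2+1/r_3$ form; one may think of it as the Inradius/Exradius Invariant with apex $I$ and base line $BC$, the common value being $2/\mathrm{dist}(I,BC)$ and the relevant excircles being those opposite $I$ in each of $T_L,T_R$. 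Two more equations of the same shape are obtained by applying Theorem~\ref{thm:in-ex} to $\triangle CIA$ with cevian $IE$ and to $\triangle AIB$ with cevian $IF$. The three pairs $\{T_L,T_R\}$ then form a perfect matching of the six small triangles.

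The key combinatorial point, to be read off from the figure, is that the partition $\{1,4,5\}$ versus $\{2,3,6\}$ from Theorem~\ref{thm:in-incenter60} splits each of these three matched pairs: each foot-cevian segment separates one triangle indexed in $\{1,4,5\}$ from one indexed in $\{2,3,6\}$. Orienting each of the three in--ex equations so that the index from $\{1,4,5\}$ sits on the left and the index from $\{2,3,6\}$ sits on the right, and adding, yields
\[
\Bigl(\tfrac{1}{r_1}+\tfrac{1}{r_4}+\tfrac{1}{r_5}\Bigr)-\Bigl(\tfrac{1}{r_2}+\tfrac{1}{r_3}+\tfrac{1}{r_6}\Bigr)=\Bigl(\tfrac{1}{\rho_1}+\tfrac{1}{\rho_4}+\tfrac{1}{\rho_5}\Bigr)-\Bigl(\tfrac{1}{\rho_2}+\tfrac{1}{\rho_3}+\tfrac{1}{\rho_6}\Bigr).
\]
The right-hand side is $0$ by Theorem~\ref{thm:in-incenter60}, so the left-hand side is also $0$, which is the claimed identity. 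The main obstacle is only this combinatorial bookkeeping: one has to check from the figure that the foot-cevian matching is compatible with the $\{1,4,5\}|\{2,3,6\}$ partition. The hypothesis $\angle ABC=60\degrees$ is not used directly here; it enters only through Theorem~\ref{thm:in-incenter60}.
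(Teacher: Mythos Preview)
Your proof is correct and follows essentially the same approach as the paper: the paper's entire proof is the sentence ``Applying Theorem~\ref{thm:in-ex} yields the following result about excircles,'' and your argument spells out exactly how that application works---one instance of Theorem~\ref{thm:in-ex} (equivalently, the Inradius/Exradius Invariant with apex $I$) for each of the three cevian segments $ID$, $IE$, $IF$, then summing and invoking Theorem~\ref{thm:in-incenter60}. Your observation that the foot-cevian pairs $\{1,2\},\{3,4\},\{5,6\}$ each split across the partition $\{1,4,5\}\mid\{2,3,6\}$ is precisely the bookkeeping needed to make the sum telescope correctly.
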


\begin{figure}[h!t]
\centering
\includegraphics[width=0.5\linewidth]{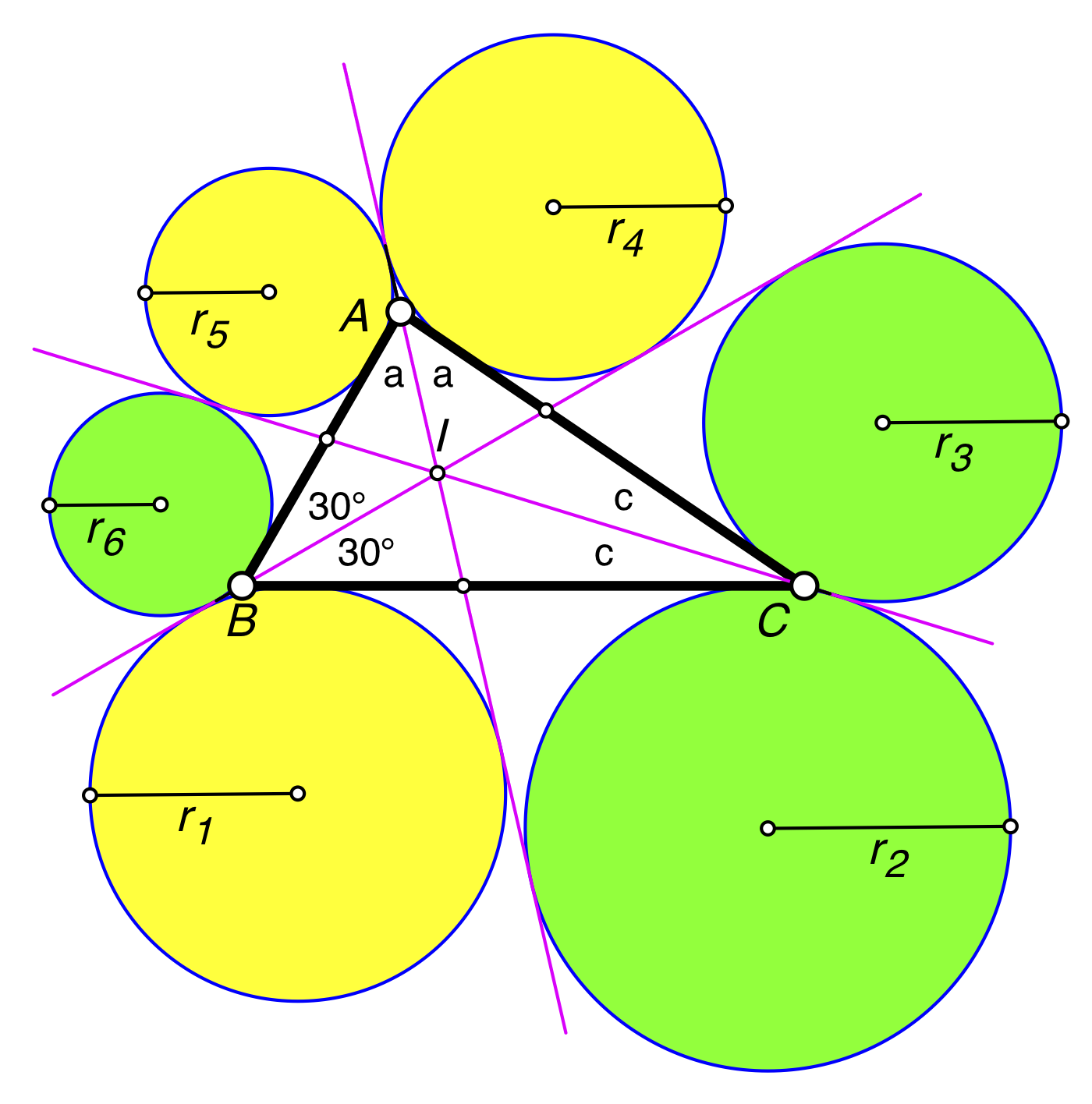}
\caption{incenter with excircles}
\label{fig:incenter60}
\end{figure}

When we change $\angle ABC$ from $60\degrees$ to $120\degrees$, we get a surprising result.

\begin{theorem}
\label{thm:incenter120}
Let $I$ be the incenter of $\triangle ABC$ and suppose $\angle ABC=120\degrees$.
Let $r_1$ through $r_6$ be the radii of six circles tangent to the sides of $\triangle ABC$ and the cevians through $I$ situated as shown in Figure~\ref{fig:figure120}.
Then
$$\frac{1}{r_1}+\frac{1}{r_3}+\frac{1}{r_4}+\frac{1}{r_6}=\frac{1}{r_2}+\frac{1}{r_5}.$$
\end{theorem}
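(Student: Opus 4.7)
The identity is of the unusual $4$-and-$2$ form, unlike the $3$-and-$3$ identities of Theorems \ref{thm:incenter60} and \ref{thm:circumcenter}, so the clean Inradius/Exradius swap used to deduce those results will not by itself close the argument here: applying the swap leaves a nonzero residual constant. My plan is first to carry out this reduction, then to capture the residual using the $120^{\circ}$ hypothesis.

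Step 1 (Reduction to inradii). Let $\rho_i$ denote the inradius of the $i$th small triangle $T_i$ formed by the angle bisectors $AD,BE,CF$ through $I$. In each $T_i$, the opposite vertex of the side lying on $\triangle ABC$ is $I$, and $I$ is at distance $r$ (the inradius of $\triangle ABC$) from every side of $\triangle ABC$. The Inradius/Exradius Invariant therefore gives
$$\frac{1}{\rho_i}-\frac{1}{r_i}=\frac{2}{r},\qquad i=1,\dots,6.$$
Because the asserted identity has four reciprocals on the left and two on the right, substituting reduces it to the equivalent inradius identity
$$\frac{1}{\rho_1}+\frac{1}{\rho_3}+\frac{1}{\rho_4}+\frac{1}{\rho_6}-\frac{1}{\rho_2}-\frac{1}{\rho_5}=\frac{4}{r}.$$

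Step 2 (Verification via the $120^{\circ}$ hypothesis). The two singled-out indices $2$ and $5$ should correspond (up to relabeling from the figure) to the pair of small triangles sharing vertex $B$, since $BI$ bisects the $120^{\circ}$ angle into two $60^{\circ}$ halves. Using the standard formulas $AI=r/\sin(A/2)$, $BI=r/\sin 60^{\circ}=2r/\sqrt{3}$, $CI=r/\sin(C/2)$, the Angle Bisector Theorem for the feet $D,E,F$, and Stewart's Theorem for the segments $ID,IE,IF$ (as in Lemma \ref{lemma:2}), each edge of every $T_i$, and hence each $\rho_i=K_i/s_i$, is an explicit function of $a,b,c$. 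Imposing $\cos B=-1/2$ (so $b^2=a^2+c^2+ac$) and using $r=K/s$, the simplification should collapse the left-hand side to $4s/K=4/r$.

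The main obstacle is Step 2. The expressions involve nested radicals coming from the bisector lengths, and the $120^{\circ}$ hypothesis must intervene precisely to make the difference equal $4/r$ rather than some other combination. Following the practice of Lemmas \ref{lemma:2} and \ref{lemma:3}, I would delegate this verification to a computer algebra system, writing everything in terms of $a$, $c$, $s$, $K$ after eliminating $b$ via the Law of Cosines at $B=120^{\circ}$. A conceptual explanation of the $4/r$ constant---perhaps through Lemma \ref{lemma:angle-invariant} applied to the two $60^{\circ}$ half-angles at $B$, where $\tan 30^{\circ}=1/\sqrt{3}$ produces a tractable relation between $\rho_2,\rho_5$ and the areas $K_2,K_5$---would be a desirable follow-up.
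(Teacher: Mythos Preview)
Your Step 1 is correct and is a genuine addition to what the paper does. The Inradius/Exradius Invariant applies exactly as you say: in each small triangle $T_i$ the apex is $I$, whose distance to every side of $\triangle ABC$ is the inradius $r$, so $1/\rho_i-1/r_i=2/r$; the $4$-and-$2$ shape of the identity then leaves the residual $4/r$ exactly as you computed. This reduction is not in the paper.

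From that point on, however, your Step 2 and the paper's proof converge: both are ``compute all segment lengths explicitly and let a computer algebra system verify the identity.'' The paper works directly with the exradii $r_i$, parametrizes trigonometrically (circumradius $1/2$, half-angles $a$ and $c$ with $c=30^\circ-a$), and simplifies $\sum\pm 1/r_i$ to $0$. You instead propose side-length parameters with $b$ eliminated via $b^2=a^2+c^2+ac$ and aim to show the inradius combination equals $4/r$. Either parametrization works; neither avoids the CAS step, which the paper explicitly flags as Open Question 2.

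One caution: your guess that indices $2$ and $5$ are the two triangles at vertex $B$ is almost certainly wrong under the paper's labeling convention (going around, $T_1$ and $T_6$ sit at $B$, while $T_2$ is at $C$ and $T_5$ at $A$). This does not affect your argument, since your Step 2 is a blanket CAS verification that does not rely on that interpretation, but the heuristic you offer for why $2$ and $5$ are special (the $60^\circ$ half-angles at $B$) does not match the figure and should be dropped or rethought.

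In short: the reduction is a nice extra step, the endgame is the same CAS verification as the paper's, and the proposal is correct at the same level of rigor the paper itself adopts.
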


\begin{figure}[h!t]
\centering
\includegraphics[width=0.5\linewidth]{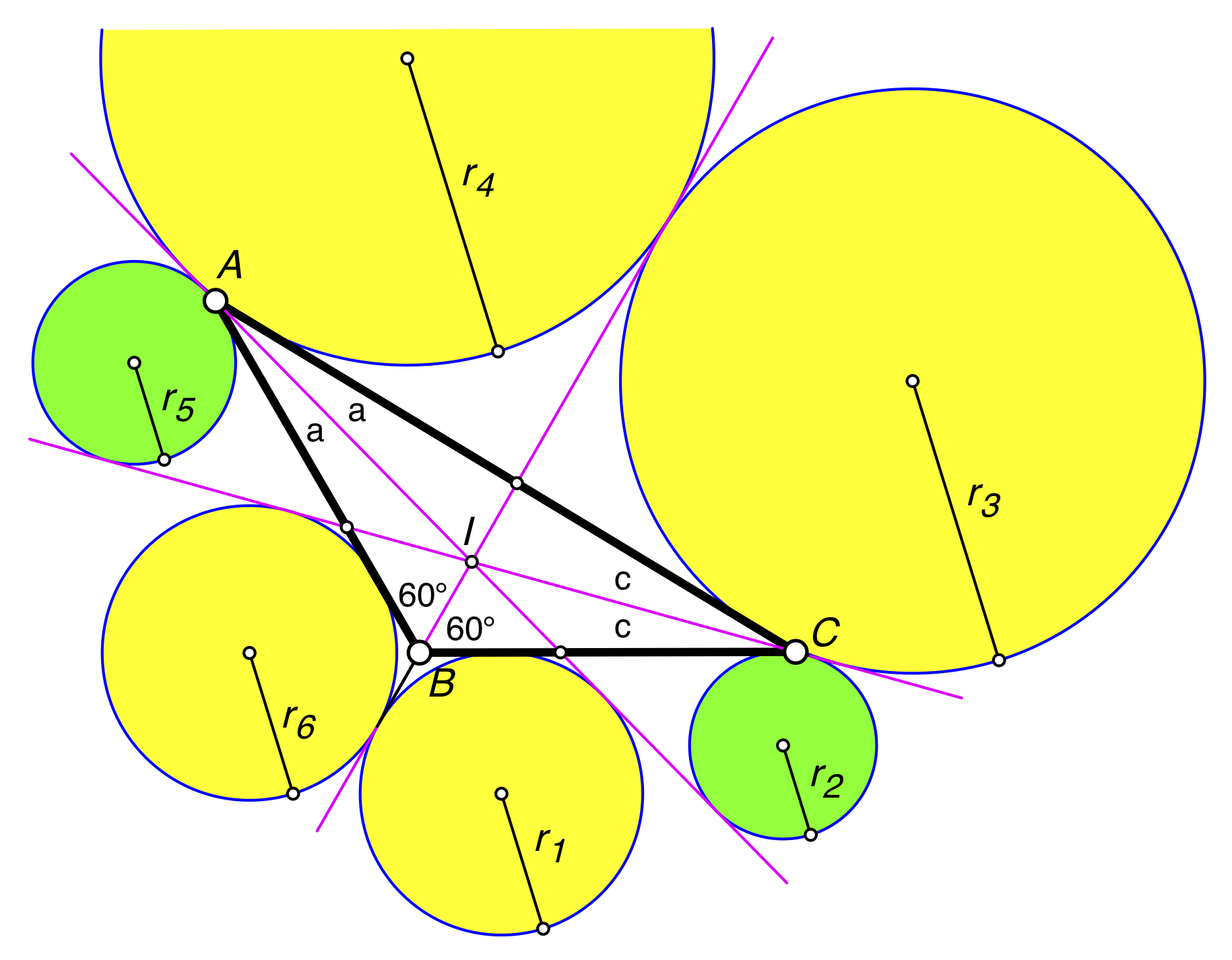}
\caption{incenter with excircles}
\label{fig:figure120}
\end{figure}

\begin{proof}
The proof is similar to the proof of Theorem 6.1 from \cite{Rabinowitz}.
Without loss of generality, we assume the circumradius of $\triangle ABC$ is $1/2$.
Then we use the Law of Sines to find the lengths of all the line segments associated with
$\triangle ABC$ and the three cevians in terms of the angles $a$ and $c$.
These values are given in \cite{Rabinowitz}. Then noting that $c=30\degrees-a$,
we use these values to compute the values of the $r_i$. Plugging these values into
the expression
$$\frac{1}{r_1}+\frac{1}{r_3}+\frac{1}{r_4}+\frac{1}{r_6}-\frac{1}{r_2}-\frac{1}{r_5}$$
and simplifying (using a computer algebra system) gives 0.
\end{proof}

The proof of Theorem \ref{thm:incenter60} depends on the proof of Theorem \ref{thm:in-incenter60}
whose only known proof (from \cite{Rabinowitz}) involves computer simplification of
complicated trigonometric expressions.

\begin{open}
Are there simple proofs of Theorems \ref{thm:incenter60} and \ref{thm:incenter120}
similar in complexity to the other proofs in this paper?
\end{open}
In Theorems \ref{thm:incenter60} and \ref{thm:incenter120}, $\angle ABC$ has a fixed value ($60\degrees$ or $120\degrees$). We can wonder what happens if we drop this restriction.

\begin{theorem}
\label{thm:ex-incenter-relation}
Let $I$ be the incenter of $\triangle ABC$.
Let $r_1$ through $r_6$ be the radii of six circles tangent to the sides of $\triangle ABC$ and the cevians through $I$ situated as shown in Figure~\ref{fig:ex-incenter-relation}.
If $\alpha=\frac{1}{2}\angle BAC$, $\beta=\frac{1}{2}\angle CBA$, $\gamma=\frac{1}{2}\angle ACB$,
then
$$\frac{\cos\gamma}{r_1}+\frac{\cos\alpha}{r_3}+\frac{\cos\beta}{r_5}=
\frac{\cos\beta}{r_2}+\frac{\cos\gamma}{r_4}+\frac{\cos\alpha}{r_6}.$$
\end{theorem}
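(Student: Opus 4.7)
The plan is to follow the strategy used in Theorems~\ref{thm:Nagel1} and~\ref{thm:incenter120}: express each $r_i$ as a closed-form function of $\alpha,\beta,\gamma$ and the circumradius, and then verify the identity directly.

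Assume $R=1$. Because the cevians $AD$, $BE$, $CF$ are the three internal angle bisectors, every length of interest has a compact trigonometric form: $BI=4\sin\alpha\sin\gamma$, $CI=4\sin\alpha\sin\beta$, $AI=4\sin\beta\sin\gamma$; the ratio $AI{:}ID=(b+c){:}a$ gives $ID=r/\cos(\beta-\gamma)$ with cyclic analogues for $IE$ and $IF$; and the bisector foot ratios give $BD=ac/(b+c)$, etc. Each of the six small triangles meeting at $I$ has area $\tfrac{1}{2}\,(\text{side on }\triangle ABC)\cdot r$, since $I$ lies at distance $r$ from every side of $\triangle ABC$, and $r_i$ is the radius of the excircle of the $i$th small triangle opposite vertex $I$.

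The three cevians produce six sectors at $I$ whose vertex angles fall into three vertical-angle pairs: $\angle BID=\angle AIE=90^\circ-\gamma$, $\angle DIC=\angle AIF=90^\circ-\beta$, $\angle CIE=\angle BIF=90^\circ-\alpha$. Since the excircle opposite a vertex-angle $\theta$ has radius $s\tan(\theta/2)$, we have $r_1=s_1\tan(45^\circ-\gamma/2)$ and similarly for the others, where $s_i$ denotes the semiperimeter of the $i$th small triangle. The trigonometric identity
\[
\cos\theta\cot\!\left(45^\circ-\tfrac{\theta}{2}\right)=2\cos^2\!\left(45^\circ-\tfrac{\theta}{2}\right)=1+\sin\theta
\]
then collapses each weighted term to $\cos\gamma/r_1=(1+\sin\gamma)/s_1$ (and the five cyclic analogues), so the theorem reduces to the cleaner statement
\[
\frac{1+\sin\gamma}{s_1}+\frac{1+\sin\alpha}{s_3}+\frac{1+\sin\beta}{s_5}=\frac{1+\sin\beta}{s_2}+\frac{1+\sin\gamma}{s_4}+\frac{1+\sin\alpha}{s_6}.
\]

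To finish, substitute the closed form for each $s_i$, which follows directly from the lengths above. The six terms pair off naturally so that $(1+\sin\gamma)/s_1$ and $(1+\sin\alpha)/s_6$ share a denominator of the form $\sin\alpha\sin\gamma\cos(\beta/2)$, and cyclically. Applying product-to-sum to each combined numerator reduces the problem to verifying the single cyclic identity
\[
\sum_{\text{cyc}}\sin\!\left(\tfrac{\alpha}{2}\right)\sin\!\left(\alpha+\tfrac{\pi}{4}\right)\sin\!\left(\tfrac{\beta-\gamma}{2}\right)=0,
\]
which in turn follows from one more product-to-sum expansion that yields six cosine differences cancelling in pairs once $\alpha+\beta+\gamma=\pi/2$ is used. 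The main obstacle is the algebraic bookkeeping: the explicit $s_i$ carry factors like $\cos(\beta-\gamma)$ that persist through several reductions, so a computer algebra system is the natural tool for the final simplification, matching the approach used elsewhere in the paper.
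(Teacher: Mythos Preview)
Your approach is essentially the same as the paper's: parametrize all segment lengths trigonometrically (here via the half-angles $\alpha,\beta,\gamma$ with circumradius $1$) and then hand the resulting expression to a computer algebra system. The paper's proof says exactly that and nothing more.

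Where you go further than the paper is the intermediate reduction. Recognizing that each small triangle has its vertex at $I$ with angle $90^\circ-\gamma$ (etc.), and invoking the exradius formula $r_V=s\tan(V/2)$, you collapse $\cos\gamma/r_1$ to $(1+\sin\gamma)/s_1$. That identity is correct, and it turns the theorem into a statement purely about the semiperimeters $s_i$, which is a genuine structural insight the paper does not record. After that point, however, your argument becomes sketchy: the claim that $(1+\sin\gamma)/s_1$ and $(1+\sin\alpha)/s_6$ ``share a denominator of the form $\sin\alpha\sin\gamma\cos(\beta/2)$'' is not obvious from the closed forms you wrote down (indeed $s_1$ carries a factor $\cos(\beta-\gamma)$ in its denominator, not $\cos(\beta/2)$), and the asserted reduction to the single cyclic identity is stated rather than shown. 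Since you explicitly defer the final simplification to a CAS anyway, these heuristic claims are harmless, but they are not themselves a proof. In short: same method as the paper, with one extra clean step at the start and some optimistic hand-waving at the end that the CAS call absorbs.
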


\begin{figure}[h!t]
\centering
\includegraphics[width=0.5\linewidth]{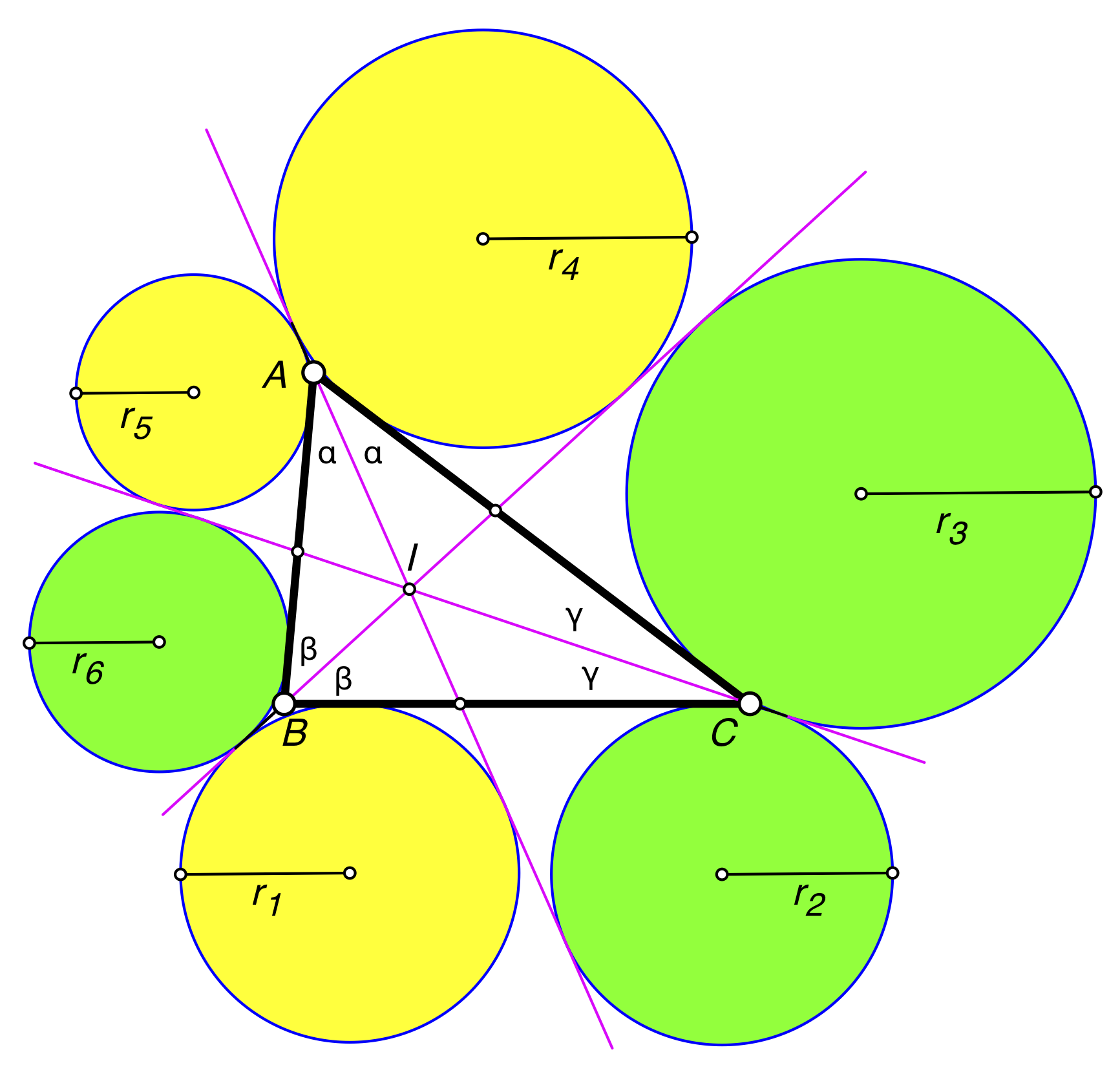}
\caption{incenter with excircles}
\label{fig:ex-incenter-relation}
\end{figure}

\begin{proof}
As with Theorem \ref{thm:incenter120}, the proof uses computer simplification of
trigonometric expressions for the lengths of the segments in the figure.
\end{proof}

There can be many relationships between the $r_i$. Although Theorem \ref{thm:ex-incenter-relation} involves $\alpha$, $\beta$, and $\gamma$, this does not preclude the existence of a relationship involving only the $r_i$.

\begin{open}
Is there a simple relationship between the radii of excircles associated with an arbitrary triangle
and the cevians through its incenter that does not depend on the shape of the triangle?
\end{open}

\bigskip

\end{document}